\numberwithin{equation}{section}
\theoremstyle{plain}
\newtheorem{theorem}[equation]{Theorem}
\newtheorem{lemma}[equation]{Lemma}
\theoremstyle{definition}
\newtheorem{defi}[equation]{Definition}
\theoremstyle{remark}
\newtheorem{remark}[equation]{Remark}
\newcommand{\supp}{\mathrm{supp}}
\DeclareMathOperator*{\essinf}{ess\,inf}
\def\Xint#1{\mathchoice
  {\XXint\displaystyle\textstyle{#1}}%
  {\XXint\textstyle\scriptstyle{#1}}%
  {\XXint\scriptstyle\scriptscriptstyle{#1}}%
  {\XXint\scriptscriptstyle\scriptscriptstyle{#1}}%
  \!\int}
\def\XXint#1#2#3{{\setbox0=\hbox{$#1{#2#3}{\int}$}
    \vcenter{\hbox{$#2#3$}}\kern-.5\wd0}}
\def\avgint{\Xint-}
\title[Necessary conditions]{Neccessary conditions for two weight inequalities for singular integral operators}
\author{David Cruz-Uribe, OFS}
\author{John-Oliver MacLellan}
\address{David Cruz-Uribe, OFS \\ 
Department of Mathematics \\ 
University of Alabama \\
Tus\-ca\-loosa, AL 35487, USA}
\email{dcruzuribe@ua.edu}
\address{John-Oliver MacLellan, Department of Mathematics, University of Alabama\\
Tus\-ca\-loosa, AL 35487, USA}
\email{jomaclellan@crimson.ua.edu}
\dedicatory{In memoriam of Benjamin Muckenhoupt and Richard Wheeden,\\
  who pioneered the study of two weight inequalities.}
\thanks{The first author is supported by 
  research funds from the Dean of the College of Arts \& Sciences,
  University of Alabama. }
\subjclass[2010]{42B25, 42B30, 42B35}
\keywords{singular integrals, averaging operators, $A_p$ weights}
\date{04/18/2020}
\begin{document}

\begin{abstract}
 We prove necessary conditions on pairs of measures $(\mu,\nu)$ for a
 singular integral operator $T$ to satisfy weak $(p,p)$ inequalities,
 $1\leq p<\infty$, provided the kernel of $T$ satisfies a weak
 non-degeneracy condition first introduced by Stein~\cite{MR1232192},
 and the measure $\mu$ satisfies a weak doubling condition related to
 the non-degeneracy of the kernel.  
 We also show similar results for pairs of measures $(\mu,\sigma)$ for
 the operator $T_\sigma f = T(f\,d\sigma)$, which has come to play an
 important role in the study of weighted norm inequalities.  Our major
 tool is a careful analysis of the strong type inequalities for
 averaging operators; these results are of interest in their own
 right.  Finally, as an application of our techniques, we show that in
 general a singular operator does not satisfy the endpoint strong type
 inequality $T : L^1(\nu) \rightarrow L^1(\mu)$.   Our results unify
 and extend a number of known results.
\end{abstract}

\maketitle

\section{Introduction} 
The goal of this paper is to establish necessary conditions for two
weight, weak type inequalities for Calder\'on-Zygmund operators. This
problem has a long history.  In the one weight case it is well known
that if each of the Riesz transforms is of weak type $(p,p)$ with
respect to a weight $w$, then $w \in A_p$. See for example
\cite[~Theorem~3.7, p.~417]{MR807149}. Stein \cite[p.~210]{MR1232192},
showed that if any convolution type singular integral operator whose
kernel satisfies a weak non-degeneracy condition is bounded on
$L^p(w)$, then $w \in A_p$. The necessity of two weight $A_p$ for the
weak $(p,p)$ inequality for the Hilbert transform was established by
Muckenhoupt and Wheeden~\cite{MR0417671}.

In this paper we consider two versions of this problem. First suppose $(\mu, \nu)$
is a pair of positive regular Borel measures on $\mathbb{R}^n$, where
$\mu$ satisfies a weak doubling condition (see Definition
\ref{defi:directionaldoubling}) and $T$ is a Calder\'on-Zygmund operator
whose kernel satisfies a weak non-degeneracy condition (see Definition
\ref{defi:nondegeneracy}). We first prove the following result on the
necessity of the two-weight $A_p$ condition (see
Definition \ref{def:A_p}).

\begin {theorem}\label{thm:necessity measures}
  Let $T$ be a Calder\'on-Zygmund operator with a non-degenerate
  kernel in the direction $u_0$. Suppose that for some
  $ 1 \leq p < \infty$, and a pair of positive regular Borel measures
  $(\mu,\nu)$, with $\mu$ directionally doubling in the direction
  $u_0$,
\begin{equation}
\|Tf\|_{L^{p,\infty}(\mu)} \leq C \|f\|_{L^p(\nu)}.
\end{equation}
Then:
\begin{enumerate}
\item $d\nu=d\nu_s+vdx$ where $v \in L^1_{\text{loc}}$ and $\nu_s$ is singular;
\item $\mu \ll \nu $, and $\mu \ll dx$, so $d\mu=u dx$ where $u\in L^1_{\text{loc}}$;
\item  $(u,v) \in A_p$ and $u(x) \leq Cv(x)$ a.e.
\end{enumerate} 
\end{theorem}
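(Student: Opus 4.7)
The plan is to reduce the weak-type bound for $T$ to a strong-type bound for a family of averaging operators, and then to extract (1)--(3) from that averaging inequality.

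First I would exploit the non-degeneracy of $K$ in the direction $u_0$: there should be constants $c_0 > 0$ and $\lambda > 0$ such that for every ball $B = B(y_0,r)$ and its shadow $B^* := B(y_0 + \lambda r u_0, r)$, one has $|K(x,y)| \ge c_0 r^{-n}$ for all $(x,y) \in B^* \times B$. Consequently, for nonnegative $f$ supported in $B$,
\[
|Tf(x)| \ge c_0 \avgint_B f\,dy \quad\text{for all } x \in B^*.
\]
Choosing threshold $\tfrac12 c_0 \avgint_B f\,dy$ in the weak-type hypothesis produces, via Kolmogorov's argument, the strong-type averaging bound
\begin{equation}\label{plan:avg}
\avgint_B f\,dy \cdot \mu(B^*)^{1/p} \le C \|f\|_{L^p(\nu)}
\end{equation}
for every ball $B$ and every $f \ge 0$ with $\supp f \subseteq B$. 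This is the key intermediate fact, and the paper's promised strong-type analysis of averaging operators then feeds on \eqref{plan:avg} to deliver (1)--(3).

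Testing \eqref{plan:avg} against $f = \chi_E$ with $E \subset B$ Borel yields
\begin{equation}\label{plan:test}
\frac{|E|^p}{|B|^p}\, \mu(B^*) \le C\,\nu(E).
\end{equation}
Writing $d\nu = v\,dx + d\nu_s$ for the Lebesgue decomposition of $\nu$, local finiteness of $\nu$ forces $v \in L^1_{\text{loc}}$, giving (1). For (2), if $|F| = 0$ then by outer regularity of Lebesgue measure I would cover $F$ by shadows $B_k^*$ of balls $B_k$ with $\sum |B_k|$ small and apply \eqref{plan:test} with $E = B_k$ to conclude $\mu \ll dx$, so $d\mu = u\,dx$. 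The inclusion $\mu \ll \nu$ is obtained analogously: if $\nu(F) = 0$ then \eqref{plan:test} applied with sets $E \subset B$ approximating $F$ in Lebesgue measure forces $\mu(B^*) = 0$ for every ball $B$ whose shadow contains $F$, and a Vitali-style covering argument using the directional doubling completes the proof.

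To obtain the $A_p$ condition I would use the directional doubling hypothesis to conclude $\mu(B) \le C\mu(B^*)$ (iterating the shadow construction along $u_0$ if necessary), which lets me replace $\mu(B^*)$ in \eqref{plan:avg} by $\mu(B)$. Then the standard test function $f = (v+\epsilon)^{1-p'}\chi_B$, followed by $\epsilon \downarrow 0$, converts \eqref{plan:avg} into
\[
\avgint_B u\,dx \, \Bigl(\avgint_B v^{1-p'}\,dx\Bigr)^{p-1} \le C,
\]
the two-weight $A_p$ condition; the case $p=1$ admits an analogous treatment with an $\essinf$-type test function. Finally, the pointwise bound $u \le Cv$ a.e. follows by shrinking $B$ about a common Lebesgue point of $u$ and $v$, with the center of $B$ chosen so that both $B$ and $B^*$ collapse to that point; passing to the limit in \eqref{plan:test} with $E = B$ then yields $u(x) \le Cv(x)$ via Lebesgue differentiation.

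The main obstacle I anticipate is the second step: the careful handling of the singular part $\nu_s$ and the covering arguments needed to convert the shifted inequality \eqref{plan:test} into the clean absolute-continuity statements $\mu \ll \nu$ and $\mu \ll dx$. The directional doubling hypothesis on $\mu$ is used in an essential way here, not only to relate $\mu(B^*)$ and $\mu(B)$ but also to control overlaps when covering arbitrary null sets by shadow balls; once this is in place the passage from \eqref{plan:avg} to the quantitative conclusions in (3) is mostly routine.
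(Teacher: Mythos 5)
Your overall strategy is the same as the paper's: use non-degeneracy plus the smoothness estimate to get a pointwise lower bound for $Tf$ on a shifted cube/ball, convert the weak $(p,p)$ hypothesis into a uniform strong-type bound for the averaging operators, use directional doubling to replace $\mu(B^*)$ by $\mu(B)$, and then read off (1)--(3) from the testing inequality. However, two steps as written do not go through. First, a minor but real one: the bound $|K(x,y)|\ge c_0 r^{-n}$ on $B^*\times B$ does not by itself yield $|Tf(x)|\ge c_0\avgint_B f\,dy$ for $f\ge 0$, because $K$ could change sign on $B^*\times B$ and produce cancellation. What you actually need (and what the smoothness estimate delivers, for $\lambda$ large, in the form $|K(x,y)-K(x_0,y_0)|\le\tfrac12|K(x_0,y_0)|$) is that $K$ has \emph{constant sign} on $B^*\times B$ together with the size bound; with that, $|Tf(x)|=\int_B|K(x,y)|f(y)\,dy$ and the lower bound follows.

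The more serious gap is in the absolute continuity arguments, precisely the step you flag as the anticipated obstacle. Your argument for $\mu\ll dx$ --- cover a Lebesgue-null $F$ by shadows $B_k^*$ with $\sum|B_k|$ small and apply the testing inequality with $E=B_k$ --- only yields $\mu(F)\le C\sum_k\nu(B_k)$, and smallness of $\sum|B_k|$ gives no control on $\sum\nu(B_k)$ when $\nu$ has a singular part (e.g.\ an atom inside $F$). The missing idea is to excise the singular support: test with $E=B_k\setminus\supp(\nu_s)$, which has full Lebesgue measure in $B_k$ but satisfies $\nu(E)=\int_{B_k}v\,dx$, so that only the absolutely continuous part appears on the right; then local integrability of $v$ closes the argument. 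Similarly, your argument for $\mu\ll\nu$ ("sets $E\subset B$ approximating $F$ in Lebesgue measure") is vacuous when $F$ is Lebesgue-null; the correct mechanism is the \emph{outer regularity of $\nu$}: given $\nu(F)=0$, choose an open $E\supset F$ with $\nu(E)<\epsilon$, decompose $E$ into disjoint dyadic cubes $Q_j$, apply the testing inequality with $E=B=Q_j$ to get $\mu(Q_j)\le C\nu(Q_j)$, and sum. The same excision of $\supp(\nu_s)$ is needed in your $A_p$ test function $f=(v+\epsilon)^{1-p'}\chi_B$, since otherwise the term $\int_B(v+\epsilon)^{-p'}\,d\nu_s\le\epsilon^{-p'}\nu_s(B)$ on the right-hand side blows up as $\epsilon\downarrow 0$. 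With these repairs your outline matches the paper's proof (the paper routes the $A_p$ conclusion through the restricted inequality $u(Q)(|S|/|Q|)^p\le Kv(S)$ rather than the dual test function, but both are standard).
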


The second version of the problem is to let $( \mu,\sigma)$ be a pair
of positive regular Borel measures on $\mathbb{R}^n$, and consider the
singular integral operator $T_\sigma$ defined by
$$T_\sigma f(x)= T( f\,d\sigma)(x) = \int_{\mathbb{R}^n} K(x,y) f(y) \; d\sigma(y).$$
This approach to weighted norm inequalities first appeared implicitly in \cite{MR676801} for the maximal
operator. We establish necessary conditions on $(\mu,\sigma)$ for
$T_\sigma$ to satisfy the weak type inequality
$$ \mu(\{ x: |T_\sigma f(x)| > \lambda \}) \leq \frac{C}{\lambda^p} \int |f|^p d\sigma $$
for $1<p<\infty$.  (See Definition \ref{def:truncated weak (p,p)} for
a more careful definition of this operator and the meaning of this
inequality.)  This problem was considered in \cite{MR2957550}, where
they proved the necessity of the $A_p$ condition for measures (see
Definition \ref{def: Ap condition for measures}) assuming a strong
ellipticity condition.  More precisely, they assumed that there exists a family of kernels
$\{K_j\}_{j=1}^N$  such that given any unit direction vector $u$,
there exists $j$ such that $K_j$
satisfies a  non-degeneracy condition in the direction $u$ (see
Definition \ref{defi:nondegeneracy}).  In \cite{MR3470665}, they were
able to prove the stronger $PA_p$ condition (see Definition
\ref{def:Poisson Ap}) is necessary with a similar hypothesis assuming a strong
$(p,p)$ inequality. Our results are similar but we only assume that we
have a single operator 
$T_{\sigma}$ whose kernel is non-degenerate in one direction. We obtain
the necessity of $A_p$ under the additional hypothesis that $\mu$
satisfies a weak doubling condition related to the non-degeneracy
condition (see Defintion
\ref{defi:directionaldoubling}). We prove this result for completeness
since it is a simple 
application of the techniques used to prove Theorem \ref{thm:necessity
  measures}. 

\begin{theorem}\label{thm:necessity Tsigma}
  Let $(\mu,\sigma)$ be a pair of positive regular Borel measures with
  $\mu$ directionally doubling in the direction $u_0$. Suppose the
  operator $T_\sigma$ has a non-degenerate kernel in the direction
  $u_0$, and that for some $ 1 < p < \infty$,
\begin{equation}\label{weak(p,p) Tsigma}
\|T_\sigma f\|_{L^{p,\infty}(\mu)} \leq C \|f\|_{L^p(\sigma)}.
\end{equation}
Then $(\mu, \sigma) \in A_p$.
\end{theorem}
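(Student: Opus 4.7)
The plan is a direct testing argument: use the non-degeneracy of $K$ in the direction $u_0$ to produce a pointwise lower bound for $T_\sigma \chi_Q$ on a suitable translate of $Q$, then feed that lower bound into the weak-type hypothesis, and finally use the directional doubling of $\mu$ to transfer the resulting mass estimate back from the translate to $Q$ itself.

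Fix a cube $Q$ of side length $r$. Using the non-degeneracy in direction $u_0$, choose a translate $\tilde Q$ of $Q$ displaced by a distance comparable to $r$ in direction $u_0$, so that for every $x \in \tilde Q$ and every $y \in Q$ the vector $x-y$ lies in the cone where the non-degeneracy lower bound $K(x,y) \gtrsim |x-y|^{-n}$ applies, with $|x-y| \sim r$. Testing $T_\sigma$ against the admissible function $f = \chi_Q \in L^p(\sigma)$ gives
\[
T_\sigma \chi_Q(x) \;=\; \int_Q K(x,y)\, d\sigma(y) \;\gtrsim\; \frac{\sigma(Q)}{|Q|},\qquad x \in \tilde Q.
\]
Hence $\tilde Q$ lies in the super-level set of $|T_\sigma \chi_Q|$ at height $\lambda \sim \sigma(Q)/|Q|$, and the weak $(p,p)$ inequality~\eqref{weak(p,p) Tsigma} yields
\[
\mu(\tilde Q) \;\leq\; \frac{C}{\lambda^{p}} \int |\chi_Q|^p\,d\sigma \;\lesssim\; \frac{|Q|^{p}}{\sigma(Q)^{p-1}}.
\]
Because $\tilde Q$ is the translate of $Q$ by $\sim r$ in the direction $u_0$, the directional doubling hypothesis on $\mu$ (Definition \ref{defi:directionaldoubling}) gives $\mu(Q) \leq C\,\mu(\tilde Q)$, and combining these two inequalities produces
\[
\mu(Q)\,\sigma(Q)^{p-1} \;\leq\; C\,|Q|^{p},
\]
which is the two-weight $A_p$ condition for measures. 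Since $Q$ was arbitrary, $(\mu,\sigma) \in A_p$.

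The main obstacle is the geometric synchronization in the first step: the translation distance and the side length of $\tilde Q$ must be fixed once and for all so that (i) non-degeneracy furnishes a uniform lower bound on $K(x,y)$ over the entire product $\tilde Q \times Q$, and (ii) the very same pair $(Q,\tilde Q)$ is comparable in the sense of directional doubling. Both constraints are governed by the aperture of the non-degeneracy cone and the constants in Definition \ref{defi:directionaldoubling}, and checking that these parameters can be chosen compatibly is the only delicate point; once that calibration is made, the rest of the argument is the bookkeeping indicated above. An analogous lower bound argument (with $\chi_Q$ replaced by suitable $f \in L^p(\sigma)$ supported on $Q$) handles the case when $\sigma(Q) = 0$ trivially, since both sides of the $A_p$ inequality then vanish.
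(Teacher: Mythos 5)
Your overall architecture (test against $\chi_Q$ on a displaced cube, feed the lower bound into the weak-type inequality, transfer the measure back with directional doubling, conclude via the averaging-operator characterization) is the same as the paper's. But there is a genuine gap at the heart of the first step: you invoke the non-degeneracy as if it gave $|K(x,y)|\gtrsim |x-y|^{-n}$ for all $x-y$ in a \emph{cone} around $u_0$. Definition~\ref{defi:nondegeneracy} gives the lower bound only when $x-y$ is \emph{exactly} parallel to $u_0$, i.e.\ on a single line of directions. For generic $x\in\tilde Q$, $y\in Q$ the vector $x-y$ is not parallel to $u_0$, so the hypothesis says nothing directly about $K(x,y)$ — it could even vanish or change sign there, and since you drop the absolute value and write $\int_Q K(x,y)\,d\sigma(y)\gtrsim \sigma(Q)/|Q|$, cancellation is a real threat (already for the Hilbert kernel $1/(x-y)$ the sign depends on the relative position). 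The missing idea, which is the main technical content of the paper's argument, is to combine non-degeneracy \emph{at the pair of centers} $(x_0,y_0)$ with the smoothness estimate~\eqref{eq:smoothness}: choosing the separation $|x_0-y_0|=tr\sqrt{n}$ with $t$ large enough that $2C_0(1+2^{n+\delta})t^{-\delta}\le a$, one gets $|K(x,y)-K(x_0,y_0)|\le \tfrac12|K(x_0,y_0)|$ on all of $\tilde Q\times Q$, which simultaneously forces $K$ to have constant sign there and yields $|K(x,y)|\ge \tfrac{a}{2}|x_0-y_0|^{-n}$. Your ``calibration'' remark gestures at a compatibility check between constants, but the actual fix is not a choice of constants within your framework — it requires importing the Hölder regularity of the kernel, which your argument never uses.

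Two further points you elide. First, because the separation must be the large multiple $tr\sqrt{n}$ rather than merely ``comparable to $r$,'' the cubes $Q$ and $\tilde Q$ are not adjacent, and Definition~\ref{defi:directionaldoubling} only compares adjacent cubes; one must chain through $N=N(t,n)$ intermediate cubes along the direction $u_0$ to get $\mu(Q)\le C\mu(\tilde Q)$. Second, $T_\sigma$ is defined only through the truncations of Definition~\ref{def:truncated weak (p,p)}, so one must fix $\epsilon$ and $R$ (e.g.\ $\epsilon<\tfrac14 tr\sqrt{n}$, $R>4tr\sqrt{n}$) so that $K_{\epsilon,R}=K$ on $\tilde Q\times Q$ before the pointwise lower bound and the uniform weak-type constant can be used together. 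These are repairable bookkeeping issues, but the cone misreading of non-degeneracy is a substantive gap.
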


More importantly, we also establish the necessity of the $PA_p$
condition with the additional assumption that $\sigma$ is doubling.

\begin{theorem}\label{thm:necessity Tailed Ap}
  Let $(\mu, \sigma)$ be a pair of positive regular Borel
  measures with $\mu$  directionaly doubling in the
  direction $u_0$ and $\sigma$ doubling. Suppose $T_\sigma$ has a
  non-degenerate kernel in the direction $u_0$ and for some
  $1<p<\infty$,
\begin{equation}
\|T_\sigma f\|_{L^{p,\infty}(\mu)} \leq C \|f\|_{L^p(\sigma)}.
\end{equation}
Then $(\mu, \sigma) \in PA_p$. 
\end{theorem}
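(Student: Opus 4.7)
The plan is to extend the argument for Theorem~\ref{thm:necessity Tsigma} by carrying out a dyadic analysis around a fixed cube, using the doubling of $\sigma$ to pass from per-scale $A_p$-type estimates to the Poisson-tail condition $PA_p$.

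First, I fix a cube $Q$ centered at $x_Q$. As in the proof of Theorem~\ref{thm:necessity Tsigma}, the non-degeneracy of $K$ in direction $u_0$ produces a subcube $E\subset Q$ and a fixed open cone $\Gamma$ about $-u_0$ such that $K(x,y)$ has constant sign and $|K(x,y)|\gtrsim |x-y|^{-n}$ whenever $x\in E$ and $y\in x+\Gamma$. For each $k\geq 0$, set $R_k := (E+\Gamma)\cap(2^{k+1}Q\setminus 2^kQ)$. Doubling of $\sigma$, together with reverse doubling (a standard consequence of doubling), yields $\sigma(R_k)\gtrsim \sigma(2^kQ)$, while directional doubling of $\mu$ gives $\mu(E)\gtrsim \mu(Q)$.

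Next, I test the weak-type hypothesis with $f_k := \chi_{R_k}$. The pointwise lower bound $|T_\sigma f_k(x)|\gtrsim \sigma(R_k)/|2^kQ|\gtrsim \sigma(2^kQ)/|2^kQ|$ on $E$, together with $\|f_k\|_{L^p(\sigma)}^p = \sigma(R_k)\lesssim \sigma(2^kQ)$, yields (on applying the weak $(p,p)$ inequality at level $\lambda_k\asymp \sigma(2^kQ)/|2^kQ|$) the per-scale estimate
\begin{equation*}
  \frac{\mu(Q)}{|2^kQ|}\left(\frac{\sigma(2^kQ)}{|2^kQ|}\right)^{p-1} \lesssim 1, \qquad k\geq 0.
\end{equation*}

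To upgrade these per-scale bounds into $PA_p$, I would expand the Poisson-type tail of $\sigma$ in Definition~\ref{def:Poisson Ap} as a geometric sum over dyadic shells,
\begin{equation*}
  \mathcal{P}(Q,\sigma) \;\approx\; \sum_{k\geq 0} 2^{-k\delta}\,\frac{\sigma(2^kQ)}{|2^kQ|},
\end{equation*}
with $\delta>0$ the decay exponent in the tail; the doubling of $\sigma$ is what justifies this decomposition. Inserting the per-scale estimate and summing the resulting geometric series yields $(\mu(Q)/|Q|)\,\mathcal{P}(Q,\sigma)^{p-1}\lesssim 1$, i.e., $(\mu,\sigma)\in PA_p$.

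The hard part will be the summation step: the per-scale bound only controls $\sigma(2^kQ)/|2^kQ|$ up to a power of $2^k$, so the Poisson-tail decay exponent $\delta$ must dominate this growth in order for the resulting geometric series to converge. This is precisely where the doubling of $\sigma$ plays the essential role, both in justifying the tail decomposition above and in ensuring that $\delta$ can be taken large enough, depending on $p$, $n$, and the doubling constant of $\sigma$.
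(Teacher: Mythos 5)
Your reduction to per-scale estimates is correct as far as it goes: testing the weak $(p,p)$ inequality on $f_k=\chi_{R_k}$, with $\sigma(R_k)\gtrsim\sigma(2^kQ)$ from doubling of $\sigma$ (a ball inside the cone at scale $2^k\ell(Q)$ has a bounded dilate covering $2^{k+1}Q$) and $\mu(E)\gtrsim\mu(Q)$ from directional doubling, does yield $\frac{\mu(Q)}{|2^kQ|}\bigl(\frac{\sigma(2^kQ)}{|2^kQ|}\bigr)^{p-1}\lesssim 1$ for every $k$. But the summation step you flag as ``the hard part'' is a genuine gap, and it cannot be closed along the lines you propose. The Poisson tail decomposes as $\sum_k 2^{-kn(p'-1)}\frac{\sigma(2^kQ)}{|2^kQ|}$, so the decay exponent is forced to be $\delta=n(p'-1)=n/(p-1)$; it is fixed by the kernel $(|x-y_0|+r)^{-np'}$ and cannot be ``taken large enough'' using the doubling constant of $\sigma$. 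Meanwhile your per-scale bound gives exactly $\frac{\sigma(2^kQ)}{|2^kQ|}\lesssim 2^{kn/(p-1)}\bigl(\frac{|Q|}{\mu(Q)}\bigr)^{1/(p-1)}$, so each term of the series is bounded below and above by a constant times $\bigl(\frac{|Q|}{\mu(Q)}\bigr)^{1/(p-1)}$ and the sum diverges. This is not an artifact of sloppy bookkeeping: the scale-by-scale $A_p$ condition is strictly weaker than $PA_p$, so no amount of massaging the per-scale estimates can recover the Poisson condition.

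The paper's proof avoids this by applying the weak-type inequality \emph{once}, to the extremal test function
\begin{equation*}
  f_{r,S}(x)=\left(\frac{1}{|x-y_0|+r}\right)^{n(p'-1)}\chi_{C_r\cap B(y_0,S)},
\end{equation*}
supported on a truncated cone $C_r$ in the direction $u_0$ from $Q(y_0,r)$. For this $f$, both the lower bound for $|T_\sigma f_{r,S}|$ on $Q(y_0,r)$ and the norm $\|f_{r,S}\|_{L^p(\sigma)}^p$ are comparable to the same quantity $\int_{C_r}(|x-y_0|+r)^{-np'}\,d\sigma$ (using $p(p'-1)=p'$), so a single application of the weak $(p,p)$ inequality produces the full Poisson tail over the cone, with no series to sum. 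The doubling of $\sigma$ then enters only where your argument also used it: to transfer the mass of $\sigma$ in each dyadic annulus into a comparable ball lying inside the cone, thereby upgrading the tail over $C_r$ to the tail over all of $\mathbb{R}^n$ (the near part $B(y_0,tr\sqrt{n})$ being handled by the $A_p$ condition from Theorem \ref{thm:necessity Tsigma}). To repair your argument you would need to replace the family of indicator tests by this single power-weight test; as written, the proposal does not prove the theorem.
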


Finally as an application of our techniques we consider the question
of whether a Calder\'on-Zygmund operator can be bounded from
$L^1(\mu)$ to $L^1(\nu)$ for a pair of positive Borel measures
$(\mu, \nu)$.  If the operator under consideration is translation
invariant and the measures $(\mu,\nu)$ are regular, it is known that
this is impossible. See \cite[p.~468]{MR807149}. In \cite[~Theorem ~4]
{MR0417671} Muckenhoupt and Wheeden derived a necessary condition for the Hilbert
transform to be bounded from $L^1(v)$ to $L^1(u)$, where $u$ and $v$
are weights. We obtain an
analogous estimate (see \eqref{eq:strong 1-1 3} in the proof of
Theorem \ref{thm:strong 1-1}), but give a more complete
characterization in terms of measures.

\begin{theorem}\label{thm:strong 1-1}
  Let $T$ be a Calder\'on Zygmund operator with a non-degenerate
  kernel in the direction $u_0$, and let $(\mu,\nu)$ be positive Borel
  measures on $\mathbb{R}^n$.
\begin{enumerate}

\item If $\nu$ is singular with respect to Lebesgue measure and
  $T:L^1(\nu) \rightarrow L^1(\mu)$, then $\mu= 0$.

\item If $\mu$ is a regular measure with $d\mu=d\mu_s + udx$ where $\mu_s$ is
  singular with respect to Lebesgue measure, $u \not\equiv 0$, 
  $d\nu= d\nu_s + vdx$ where $\nu_s$ is singular with respect to
  Lebesgue measure, and $v$ is a non-negative measurable function such
  that $v(x)< \infty$ a.e, then $T$ is not bounded from $L^1(\nu)$ to
  $L^1(\mu)$.

\item If $\mu$ is a  regular measure that is singular
  with respect to Lebesgue measure, and directionaly doubling in the
  direction $u_0$, and $\nu$ is a positive regular Borel measure, then
  $T$ is not bounded from $L^1(\nu)$ to $L^1(\mu)$.
\end{enumerate}
\end{theorem}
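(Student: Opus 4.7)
The plan is to extract, from the strong $(1,1)$ bound, a pointwise tail estimate controlling $\mu$ on cones in terms of $\nu$ on balls, and then apply this estimate separately for each part. Given the hypothesized inequality $\|Tf\|_{L^1(\mu)} \leq C\|f\|_{L^1(\nu)}$, apply it to $f = \chi_{B(y_0, r)}$: by non-degeneracy of $K$ in direction $u_0$ there is an open cone $\Gamma(y_0)$ with vertex at $y_0$ on which $K(x,y)$ has a definite sign and satisfies a lower bound of the form $|K(x,y)| \geq c|x-y|^{-n}$, so that for $x \in \Gamma(y_0)$ with $|x-y_0| > 2r$ one obtains $|T\chi_{B(y_0,r)}(x)| \geq c'\,r^n/|x-y_0|^n$. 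Substituting into the $L^1$ bound and dividing by $r^n$ yields the tail estimate
\begin{equation*}
\int_{\Gamma(y_0) \cap \{|x-y_0|>2r\}} \frac{d\mu(x)}{|x-y_0|^n} \;\leq\; \frac{C_1}{r^n}\,\nu(B(y_0,r)),
\end{equation*}
which is the analog of the Muckenhoupt--Wheeden estimate from \cite{MR0417671}.

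For Part~(1), since $\nu$ is singular with respect to Lebesgue measure, Lebesgue differentiation gives $\nu(B(y_0,r))/r^n \to 0$ for Lebesgue-a.e.\ $y_0$. Letting $r \to 0$ in the tail estimate via monotone convergence forces $\int_{\Gamma(y_0)\setminus\{y_0\}} d\mu(x)/|x-y_0|^n = 0$, so $\mu$ vanishes on the punctured cone $\Gamma(y_0)\setminus\{y_0\}$ for a.e.\ $y_0$. Choosing a countable sequence $\{y_{0,k}\}$ dense in this full-measure set, the punctured cones $\Gamma(y_{0,k})\setminus\{y_{0,k}\}$ cover $\mathbb{R}^n$---for any $y$ the set of valid vertices is the nonempty open cone from $y$ in direction $-u_0$, which meets the dense sequence---so $\mu\equiv 0$ by countable subadditivity.

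For Part~(2), fix $y_0$ that is simultaneously a Lebesgue point of $u$ with $u(y_0) > 0$ and a Lebesgue point of $v$ with $v(y_0) < \infty$; such a point exists since $\{u > 0\}$ has positive Lebesgue measure (as $u\not\equiv 0$) and both Lebesgue-point conditions hold almost everywhere. Lebesgue differentiation gives $\nu(B(y_0,r))/r^n \leq C_2(v(y_0)+1)$ for small $r$, so the tail estimate combined with $d\mu \geq u\,dx$ yields
\[
\int_{\Gamma(y_0)} \frac{u(x)\,dx}{|x-y_0|^n} \;\leq\; C_3\bigl(v(y_0)+1\bigr) \;<\; \infty.
\]
But a dyadic annular decomposition of $\Gamma(y_0)\cap B(y_0,\delta)$---combined with Chebyshev's inequality, which shows $\{u \geq u(y_0)/2\}$ has Lebesgue density one at $y_0$---forces the left-hand side to be infinite, a contradiction. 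Part~(3) then follows at once from Theorem~\ref{thm:necessity measures} with $p=1$: strong $(1,1)$ implies weak $(1,1)$, the directional doubling of $\mu$ gives $\mu \ll dx$, and combining with the hypothesized singularity of $\mu$ forces $\mu = 0$, contrary to the implicit nontriviality of $\mu$.

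The principal technical obstacle is the derivation of the tail estimate: Definition~\ref{defi:nondegeneracy} supplies only a pointwise kernel bound, and one must verify that after integration against Lebesgue measure on $B(y_0,r)$ the resulting lower bound for $T\chi_{B(y_0,r)}$ is uniform on a sufficiently thick truncated cone $\Gamma(y_0)\cap\{|x-y_0|>2r\}$. A secondary subtlety is the careful handling of the vertex $y_0$ in Part~(1), resolved by working with the punctured cones $\Gamma(y_{0,k}) \setminus \{y_{0,k}\}$ and exploiting density of the chosen sequence.
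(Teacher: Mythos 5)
Your overall strategy---extracting a Muckenhoupt--Wheeden-type cone/tail estimate from the $L^1$ bound and then differentiating---is the same as the paper's. Part (3) is identical (reduce to Theorem \ref{thm:necessity measures} with $p=1$). Part (1) takes a slightly different route: the paper tests on $\chi_{Q(y_0,r)\setminus\supp(\nu)}$ so that the $\nu$-side vanishes outright, while you differentiate the singular measure ($\nu(B(y_0,r))/r^n\to 0$ a.e.) and then cover $\mathbb{R}^n$ by countably many punctured cones; both work for locally finite regular $\nu$, and your covering argument is sound. The technical obstacle you flag (upgrading the pointwise non-degeneracy to a lower bound for $T\chi_{B(y_0,r)}$ on a cone of fixed aperture) is handled in the paper by the chain-of-balls construction $C_r=\bigcup_{s\geq r}B(x_s,s\sqrt n)$ together with the smoothness estimate \eqref{eq:smoothness}, exactly as you anticipate.

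The genuine gap is in Part (2). You choose $y_0$ to be a Lebesgue point of $v$ and invoke $\nu(B(y_0,r))/r^n\leq C(v(y_0)+1)$ for small $r$. But the hypothesis only assumes $v$ is measurable and finite a.e.; it does \emph{not} assume $v\in L^1_{\text{loc}}$. If $v$ has a dense set of non-integrable singularities, then $\int_B v\,dx=\infty$ for every ball, $v$ has no Lebesgue points, and your tail estimate (tested on $\chi_{B(y_0,r)}$) is vacuous at every $y_0$; this case cannot be discarded, since a larger $\nu$ only makes the inequality $\|Tf\|_{L^1(\mu)}\leq C\|f\|_{L^1(\nu)}$ easier to satisfy. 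The missing idea is the Muckenhoupt--Wheeden essential-infimum test function, i.e.\ Lemma \ref{lem:essential infimum}: with $a=\essinf_{Q(y_0,r)}v$ and $A=\{x\in Q(y_0,r):v(x)<a+\epsilon\}\setminus\supp(\nu_s)$, the function $f=|A|^{-1}\chi_A$ satisfies $\|f\|_{L^1(\nu)}=|A|^{-1}\int_A v\,dx\leq a+\epsilon<\infty$ while $\int_Q f\,dy=1$, which gives $\int_{C_r}(r+|x-y_0|)^{-n}\,d\mu(x)\leq C(a+\epsilon)$ with no integrability assumption on $v$; letting $\epsilon\to 0$, $r\to 0$ and using $\lim_{r\to 0}\essinf_{Q(y_0,r)}v\leq v(y_0)$ a.e.\ yields \eqref{eq:strong 1-1 3}. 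Your subsequent divergence argument at a Lebesgue point of $u$ (dyadic annuli plus the density-one set $\{u\geq u(y_0)/2\}$) is correct and matches the paper's use of Lemma \ref{lem:differentiation theorem}. As written, your Part (2) proves the theorem only under the extra hypothesis $v\in L^1_{\text{loc}}$.
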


\begin{remark}
  The following example shows that the hypothesis in $(2)$ that $v(x)< \infty$
  a.e.~is needed.  Let $d\mu=\chi_{[-1,1]} \; dx$,
  $d\nu= \chi_{\mathbb{R} \setminus [-2,2]} \; dx+ \infty \cdot
  \chi_{[-2,2]}\; dx$, and let $Tf(x)=Hf(x)$. Then $(\mu,\nu)$
  satisfies the key estimate $\eqref{eq:strong 1-1 3}$ below, and for $f$ with
  $\text{supp}(f) \subset \{x:|x|>2\}$ we have that
\begin{multline*}
\int_{\mathbb{R}} |Hf(x)| \; d\mu(x) = \int_{-1}^1 |Hf(x)| \; dx 
\leq \int_{-1}^1 \int_{|y|>2} \frac{|f(y)|}{|x-y|} \; dy \; dx \\
= \int_{|y|>2} \int_{-1}^1   \frac{1}{|x-y|} \; dx \; |f(y)| \; dy 
 \leq 2 \int_{|y|>2} |f(y)| \; dy   = 2 \int_{\mathbb{R}} |f(y)| \; d\nu(y) .  
\end{multline*}
\end{remark}

\begin{remark}
  The following example shows that the hypothesis in $(2)$ that $\mu$ is not
  totally singular with respect to Lebesgue measure is needed. Let
  $\mu=\delta(0)$, $\nu= \frac{1}{x} dx$ and let $Tf(x)=Hf(x)$. Then
  for any $f \in L^1(\nu)$,
$$\int_{\mathbb{R}} |Hf(x)| d\mu(x)= |Hf(0)|=  \left|  \int_{\mathbb{R}} \frac{f(y)}{y} \; dy \right | \leq \int_\mathbb{R} |f(y)| d\nu(y).$$
\end{remark}

\medskip

The main idea in our proofs is to reduce the problem of obtaining
necessary conditions for the $L^p$ boundedness of singular integrals
to that of averaging operators (see Definitions~\ref{def: averaging
  operator} and~\ref{def:averaging operator Asigma}). For
singular integrals $T$, we work with the averaging operator $A_Q$.  When
$\mu=udx$, $\nu= vdx$ it is well known that the $A_p$ condition
characterizes the strong type inequality for $A_Q$;  see
Jawerth~\cite{MR833361}.  For completeness we prove this result (see Theorem
\ref{thm:Ap neccessity averaging operators}).  Furthermore,  we
obtain a characterization of the strong type inequality for $A_Q$ when
$(\mu, \nu)$ are positive regular Borel measures (see Theorem
\ref{thm: necessity averaging operators measures}). This result is new
and is interesting in its own right.

To study the singular integrals $T_\sigma$ we introduce the analogous averaging
operator $A_{Q,\sigma}f = A_Q(f\,d\sigma)$.  We show that  the $A_p$
condition for measures is also necessary and
sufficient (see Theorem \ref{thm: Neccessity Asigma}) for these
operators to be bounded, $1<p<\infty$. 

The rest of the paper is organized as follows. In Section 2 we give
preliminary definitions and notation used in this paper. In Section 3
we prove Theorem \ref{thm:necessity measures}. In Section 4 we prove
Theorem \ref{thm:necessity Tsigma} and Theorem \ref{thm:necessity
  Tailed Ap}.  Finally, in Section 5 we prove Theorem \ref{thm:strong 1-1}.

\section{Preliminaries}
Throughout this paper will use the following notation. The symbol $n$
will denote the dimension of the Euclidean space
$\mathbb{R}^n$. $Q(x,r)$ denotes the cube with center
$x \in \mathbb{R}^n$ and sidelength $2r$, while $B(x,r)$ denotes the
ball with center $x \in \mathbb{R}^n$ and radius $r$. For a cube $Q$,
$rQ$ is the cube with the same center as $Q$ and with side length $r$
times the length of $Q$. Positive constants $C,c$ may change value at
each appearance. Sometimes we will indicate the dependence on certain
parameters by writing for instance, $C(n,p)$ etc. We will work
extensively with average integrals and use the notation,
$$ \avgint_Q u \; dx = \ \frac{1}{|Q|} \int_Q u \; dx.$$

We now define the singular integral operators we are interested in.
For further, details, see~\cite{MR1800316}. 

\begin{defi}
  We say that an operator $T$ defined on measurable functions is a
  Calder\'on-Zygmund operator if $T$ is bounded on $L^2(\mathbb{R}^n)$
  and for any $ f\in L^2_c(\mathbb{R}^n)$ we have the representation
\begin{equation*}
Tf(x)= \int_{\mathbb{R}^n} K(x,y)f(y) \; dy, \quad x \notin  \text{supp}(f).
\end{equation*}
Here $K(x,y)$ is a kernel defined for all $ x \neq y$ in $ \mathbb{R}^n \times \mathbb{R}^n$, that satisfies the standard estimates
\begin{equation}\label{eq:size}
|K(x,y)| \leq \frac{C_0}{|x-y|^n}
\end{equation} and
\begin{equation} \label{eq:smoothness}
|K(x+h,y)- K(x,y)| + |K(x,y+h) - K(x,y)| \leq C_0\frac{|h|^ \delta }{|x-y|^{n+\delta}}
\end{equation}
for all $|h|< \frac{1}{2} |x-y|$ and some fixed $\delta >0$.
\end{defi}

 \medskip

 We want to define the operators $T_\sigma$  more carefully;  to do so
 we 
 follow the treatment given in~\cite{MR3688149}.  Let $(\mu,\sigma)$
 be a pair of regular Borel measures.   Fix a Calder\'on-Zygmund
 operator $T$ with kernel $K$.  
Let
 $\{\eta_{\epsilon, R}\}_{0< \epsilon< R <\infty}$ be a family of
 non-negative truncation functions with supports in the annuli
 $\epsilon<|x|<R$, and such that $\eta_{\epsilon,R}(x)=1$ if $2\epsilon<|x|<\frac{R}{2}$.  For example, we
 can take $\eta_{\epsilon, R}= \chi_{\{\epsilon<|x|<R\}}$, but other
 choices are possible.    Define the family of truncated kernels
 ${K_{\epsilon,R} (x,y) = \eta_{\epsilon,R}(x-y)K(x,y)}$.  These are bounded
 with compact support for a fixed $x$ or $y$.  Thus, the truncated
 operators defined by
\begin{equation*}
T^{\epsilon, R}_{\sigma} f(x)= \int_{\mathbb{R}^n} K_{\epsilon,R}(x,y) f(y) \;  d\sigma(y), \quad x\in \mathbb{R}^n,
\end{equation*} 
are pointwise well defined for $f\in L^1_{\text{loc}}$.   Hereafter,
we will assume that each of the  truncated kernels
$\{K_{\epsilon,R}\}_{0< \epsilon< R <\infty}$ satisfies the standard
kernel estimates~\eqref{eq:size} and~\eqref{eq:smoothness} with
uniform constants.

\begin{defi}\label{def:truncated weak (p,p)}
Given a Calder\'on-Zygmund operator $T$ with kernel $K$, 
we say that $T_\sigma$ satisfies the weak $(p,p)$ inequality,
$1<p<\infty$, provided that there exists a  family of truncations $\{\eta_{\epsilon, R}\}_{0<
  \epsilon< R <\infty}$ such that for all $f\in L^p(\sigma)$, 
\begin{equation}\label{eq:truncated weak (p,p)}
\|T_\sigma^{\epsilon,R} f \|_{L^{p,\infty}(\mu)} \leq C\|f\|_{L^p(\sigma)} 
\end{equation}
with constant independent of $\epsilon$ and $R$.  In this case we write
\begin{equation*}
\|T_\sigma f\|_{L^{p,\infty}(\mu)} \leq C\|f\|_{L^p(\sigma)}.
\end{equation*}
\end{defi}

Given this definition, in our proofs below we will need to fix
particular values of $\epsilon$ and $R$ and apply
inequality~\eqref{eq:truncated weak (p,p)}.  We will, however, generally
write $T_\sigma$ instead of $T^{\epsilon, R}_{\sigma}$ when there is
no possibility of confusion.

\begin{remark}
  While we need to fix a family of truncations to define $T_\sigma$,
  the choice is less important than it might seem at first.  In
  \cite{MR3688149}, they showed that if the pair $(\mu,\sigma)$
  satisfies the $A_p$ condition for measures, \eqref{def: Ap condition for
    measures} below, then the corresponding strong $(2,2)$ inequality for
  $T_\sigma$ holds independent of the choice of truncations used.
\end{remark}

\begin{defi}\label{defi:nondegeneracy}
  Given a Calder\'on-Zygmund operator $T$ with kernel $K(x,y)$, we say
  $T$ has a non-degenerate kernel if there exists $a>0$, and a unit
  vector $u_0$ such that for $x,\,y \in \mathbb{R}^n$,  $x-y = t u_0$, $t\in\mathbb{R}$,
\begin{equation}\label{eq:nondegeneracy}
|K(x,y)| \geq \frac{a}{|x-y|^n}.
\end{equation}
\end{defi}

For example, \eqref{eq:nondegeneracy} holds for the Hilbert transform
as well as of any of the Riesz transforms in the direction $e_j$.
However, not all singular integrals satisfy this property. See for
example \cite[Lemma 1.4]{MR1376747} where they construct 
a ``one-sided'' Calder\'on-Zygmund kernel with support in $(0,
\infty)$; they establish that a sufficient condition for this operator
to be bounded is a ``one-sided'' $A_p$ condition that is strictly
weaker than the conditions we consider.

\medskip

\begin{defi}\label{def:A_p}
Let $u,\,v$ be non-negative, measurable functions. We say the pair
$(u,v) \in A_p$, $1<p< \infty$, if
\begin{equation}\label{eq:A_p}
[u,v]_{A_p}= \sup_{Q} \left( \avgint_Q u  \; dx \right) \left(  \avgint_Q v^{1-p'} \; dx \right)^{p-1} < \infty,
\end{equation}
and in $A_1$ if 
\begin{equation}\label{eq:A1}
\avgint_Q u \; dx \leq [u,v]_{A_1}  \essinf_{x \in Q} v(x).
\end{equation} 
\end{defi}

\begin{defi}\label{def: Ap condition for measures}
 If $(\mu, \sigma)$ are positive Borel measures, we say that $(\mu,
 \sigma) \in  A_p$, $ 1 < p<\infty$, if
\begin{equation}\label{eq:Ap condition for measures}
  [\mu, \sigma]_{A_p}= \sup_Q \frac{\mu(Q)}{|Q|} \left(\frac{\sigma(Q)}{|Q|} \right)^{p-1} < \infty.
\end{equation} 
\end{defi}
\begin{remark}
If $d\mu= u \; dx$, $d\sigma=v^{1-p'}\; dx$, then \eqref{eq:Ap condition for measures} is equivalent to \eqref{eq:A_p}. 
\end{remark}
\begin{remark}\label{remark: Ap balls cubes}
  It is straightforward to see that if \eqref{eq:A_p}, \eqref{eq:A1},
  or~\eqref{eq:Ap condition for measures} hold for any cube
  $Q \subset \mathbb{R}^n$, then they also hold for any ball
  $ B \subset \mathbb{R}^n$.  We will use this fact below.
\end{remark}

\begin{remark}
  Inequality~\eqref{eq:Ap condition for measures} implies that $\mu, \sigma$ do
  not share a common point mass: if there exists a point $a$ such that
  $\sigma \{a\} \mu \{a\}>0$, then the expression in \eqref{eq:Ap
    condition for measures} blows up as $Q$ shrinks to $\{a\}$.
\end{remark}

\begin{defi}\label{def:Poisson Ap}
We say the pair $(\mu,\sigma)$ is in $PA_p$, $1<p< \infty$ if for any cube $Q(y_0,r)$, 
\begin{equation}\label{eq:Poisson Ap}
\left(\frac{\mu(Q(y_0,r))}{|Q(y_0,r)|}\right) \left(\int_{\mathbb{R}^n} \left( \frac{r^{p'-1}}{(|x-y_0|+r)^{p'}} \right)^n d\sigma(x) \right)^{p-1} \leq C.
\end{equation}
\end{defi}

This condition first appeared in \cite{MR0417671} in one dimension
where they proved that it was necessary for the strong type inequality
for the Hilbert transform to hold. The $n$-dimensional version first
appeared in \cite{MR1175693} in the context of the fractional integral
operator. When $p=2$, this condition is sometimes called ``Poisson
$A_p$". This is because the second term on the left-hand side of
\eqref{eq:Poisson Ap} is approximately the Poisson extension of
$\sigma$ evaluated at a point in the upper half plane given
by $y_0$ and $r$.  It is straightforward to see that the $PA_p$ condition implies the $A_p$ condition.

\begin{defi}
  A positive measure $\mu$ is said to be doubling if there exists a
  constant $ C>0$ such that for any cube $ Q$,
  $\mu(2Q) \leq C \mu(Q)$.
\end{defi}

Equivalently $\mu$ is doubling if $\mu(P) \leq C \mu(Q)$, whenever
$P,Q$ are adjacent cubes with $|Q|=|P|$. (We say the cubes $P,Q$ are adjacent if the
boundaries of $P$ and $Q$ share a point in common.)

For our results we do not need to assume the full doubling condition,
but rather a ``directional'' doubling condition.

\begin{defi}\label{defi:directionaldoubling}
  Let $\mu$ be a positive Borel measure, and fix a unit vector
  $u_0$. We say $\mu$ is directionally doubling in direction $u_0$ if
  there exists a constant $C_\mu>0$ such that given adjacent cubes
  $P(x_0,r),\,Q(y_0,r)$ whose centres satisfy $x_0-y_0=tu_0$,
  $t \in \mathbb{R}$,
\begin{equation}\label{eq:directionaldoubling}
\mu(P(x_0,r)) \leq C_\mu \mu(Q(y_0,r)). 
\end{equation}
\end{defi}

\begin{remark}
  Definition \ref{defi:directionaldoubling} is weaker than the
  doubling condition. For example, for $ E \subset \mathbb{R}^2$,
  define $\mu(E)=\iint_E e^{-|x|} \; dx dy$. Then it is
  straightforward to show that $\mu$ is
  directionally doubling in the direction $e_2$ but is not doubling.
\end{remark}

\section{Proof of Theorem \ref{thm:necessity measures}}
In order to proceed with the proofs of our first main result we will
need to prove some preliminary results about averaging operators.

\begin{defi}\label{def: averaging operator}
Given a cube $Q$, define the averaging operator $A_Q$ on a function $ f \in L^1_{\text{loc}}$ by
$$ A_Q f(x)= \avgint_Q f(y) \; dy \;\chi_Q(x).$$
\end{defi}

The following result first appeared in \cite{MR833361} but to the best
of our knowledge a proof does not appear in the literature.  For
completeness we sketch the proof.

\begin{theorem}\label{thm:Ap neccessity averaging operators}
Given  a cube $Q$, $1 \leq p < \infty$,  and $(u,v) \in A_p$, for all $f \in L^p(v)$
\begin{equation*} 
 \|A_Q f \|_{L^p(u)} \leq [u,v]_{A_p}^{1/p} \|f\|_{L^p(v)}.
\end{equation*}
Conversely, given $1\leq p<\infty$, if $(u,v)$ are a pair of weights such that for every cube $Q$,
\begin{equation}\label{eq:Ap neccessity averaging operators 1 }
\|A_Q f \|_{L^p(u)} \leq K \|f\|_{L^p(v)},
\end{equation}
then $(u,v) \in A_p$. Moreover, $[u,v]_{A_p} \leq K^p $. 
\end{theorem}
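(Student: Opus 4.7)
For the forward implication with $1<p<\infty$, the plan is to apply Hölder's inequality to the average, writing $\avgint_Q f\,dy = \avgint_Q (f v^{1/p})\, v^{-1/p}\,dy$ with exponents $p,p'$, which produces
$$\left(\avgint_Q |f|\,dy\right)^p \leq \avgint_Q |f|^p v\,dy \cdot \left(\avgint_Q v^{1-p'}\,dy\right)^{p-1},$$
using the identity $-p'/p = 1-p'$. Multiplying both sides by $u(Q)=|Q|\avgint_Q u\,dx$ and recognizing the $A_p$ constant on the right gives $\|A_Q f\|_{L^p(u)}^p \leq [u,v]_{A_p}\|f\|_{L^p(v)}^p$. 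When $p=1$, the inequality is immediate from the $A_1$ condition: $\|A_Q f\|_{L^1(u)} = \avgint_Q u\,dx \cdot \int_Q |f|\,dy \leq [u,v]_{A_1}\essinf_Q v \int_Q |f|\,dy \leq [u,v]_{A_1}\|f\|_{L^1(v)}$.

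For the converse with $p>1$, the natural test function is $f=v^{1-p'}\chi_Q$, since the formal identity $f^p v = v^{p(1-p')+1} = v^{1-p'}=f$ makes the right-hand side of the hypothesized inequality collapse into exactly the $A_p$ expression with constant $K^p$ after cancellation. To handle the case where $v^{1-p'}$ fails to be locally integrable or in $L^p(v)$, I would truncate by setting $g_N = \min(v^{1-p'},N)$ and applying the hypothesis to $f_N = g_N\chi_Q$. The key pointwise estimate is $g_N^p v \leq g_N$, which one verifies by splitting into the cases $v^{1-p'}\leq N$ (where equality holds) and $v^{1-p'}>N$ (where $N^p v < N^{p-1/(p'-1)}=N$, using the identity $1/(p'-1)=p-1$). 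Substituting $f_N$ into the hypothesis yields $u(Q)\bigl(\avgint_Q g_N\bigr)^p \leq K^p |Q| \avgint_Q g_N$; dividing by $|Q|\avgint_Q g_N$ and letting $N\to\infty$ by monotone convergence produces $[u,v]_{A_p}\leq K^p$, with the edge case $\avgint_Q v^{1-p'}=\infty$ forcing $u(Q)=0$ so that the $A_p$ expression is trivially finite.

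For $p=1$, I would test the hypothesis with $f=\chi_E$ for an arbitrary measurable $E\subseteq Q$, reducing the inequality to $\frac{u(Q)}{|Q|}|E|\leq K\int_E v\,dy$. A standard Lebesgue differentiation argument at the Lebesgue points of $v$ then yields $\frac{u(Q)}{|Q|}\leq K v(x)$ for almost every $x\in Q$, hence $\avgint_Q u \leq K\essinf_Q v$, i.e.~$[u,v]_{A_1}\leq K$. The main technical obstacle is the truncation step in the $p>1$ converse: establishing the pointwise bound $g_N^p v\leq g_N$ and verifying that the limiting process behaves correctly in the degenerate scenarios where $\avgint_Q g_N\to 0$ (forcing $v=\infty$ a.e.~on $Q$) or $\avgint_Q g_N\to\infty$ (forcing $u(Q)=0$).
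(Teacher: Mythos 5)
Your proof is correct, and the forward direction (H\"older's inequality with the splitting $f = fv^{1/p}\cdot v^{-1/p}$ for $p>1$, and the direct $A_1$ estimate for $p=1$) is exactly the paper's argument. Where you diverge is in the converse: the paper tests \eqref{eq:Ap neccessity averaging operators 1 } only against characteristic functions $f=\chi_S$, $S\subset Q$, obtaining the condition $u(Q)\left(|S|/|Q|\right)^p\leq K^p v(S)$, and then cites Garc\'ia-Cuerva--Rubio de Francia for the fact that this restricted condition already implies $(u,v)\in A_p$ with $[u,v]_{A_p}\leq K^p$. You instead give a self-contained argument: for $p>1$ you use the canonical extremal function $v^{1-p'}\chi_Q$, truncated as $g_N=\min(v^{1-p'},N)$, and your key pointwise bound $g_N^p v\leq g_N$ is verified correctly (the two cases reduce to the identities $p(1-p')+1=1-p'$ and $p-1/(p'-1)=1$); dividing by $|Q|\avgint_Q g_N$ and letting $N\to\infty$ then yields the $A_p$ bound, with the degenerate cases $\avgint_Q v^{1-p'}\in\{0,\infty\}$ handled as you describe. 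For $p=1$ your differentiation argument with $f=\chi_E$ is also sound (one could equally argue by contradiction on the set where $Kv<u(Q)/|Q|$). The trade-off is modest: the paper's route is shorter on the page but outsources the real work to a reference, whereas yours makes the theorem self-contained at the cost of the truncation bookkeeping; the cited lemma is itself proved by essentially your truncation argument, so the underlying mathematics is the same. One small point of care: when testing with $\chi_E$ you should tacitly restrict to sets with $\int_E v<\infty$ so that $f\in L^p(v)$, but since the target inequality is vacuous otherwise this costs nothing.
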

\begin{proof}
Let $Q \subset \mathbb{R}^n$.  We first prove the sufficiency of the $A_1$ condition when $p=1$. Indeed,
\begin{equation*}
 \|A_Q f\|_{L^1(u)}= \int_{\mathbb{R}^n} \left\lvert \avgint_Q f \; dy \chi_Q(x) \right\rvert u \; dx  
 \leq  \int_Q \frac{u(Q)}{|Q|} |f| \; dy \leq [u,v]_{A_1} \int_{\mathbb{R}^n} |f|v \; dy. 
\end{equation*}

If $p>1$, by H\"older's inequality,
\begin{align*}
\|A_Q\|_{L^p(u)}^p &= \int_{\mathbb{R}^n} \left\lvert \avgint_Q f \; dy \; \chi_Q(x) \right\rvert ^p u\; dx\\
& \leq \left( \avgint_Q |f|v^\frac{1}{p} v^\frac{-1}{p} \; dy \right)^p u(Q) \\
&\leq \left( \int_Q |f|^p v \; dy \right) \left(\avgint_Q u \; dy \right) \left(\avgint_Q v^{1-p'} \; dy \right)^{p-1}\\
& \leq [u,v]_{A_p} \int_{\mathbb{R}^n} |f|^p v \; dy.
\end{align*}
To prove necessity, let  $S \subset Q$ be measurable and set $ f= \chi_S$. Then \eqref{eq:Ap neccessity averaging operators 1 } becomes
\begin{equation}\label{eq:Ap neccessity averaging operators 2}
u(Q) \left(\frac{|S|}{|Q|} \right)^p \leq Kv(S).
\end{equation} 
In \cite[p.~388]{MR807149} they show that if \eqref{eq:Ap neccessity averaging operators 2} holds, then $(u,v) \in A_p$, and that $[u,v]_{A_p} \leq K^p$. This completes the proof.   
\end{proof}

We now prove an analogue of Theorem \ref{thm:Ap neccessity averaging operators} for measures. 

\begin{theorem}\label{thm: necessity averaging operators measures}
  Let $(\mu,\nu)$ be a pair of positive regular Borel measures. Given
  $1 \leq p < \infty$, suppose that there exists a constant $C$ such
  that for every cube $Q$
\begin{equation} \label{eq:necessity averaging operators measures 1}
 \|A_Qf \|_{L^p(\mu)} \leq C \|f\|_{L^p(\nu)}.
\end{equation}
 
Then:
\begin{enumerate}
\item $d\nu=d\nu_s+vdx$ where $v \in L^1_{\text{loc}}$ and $\nu_s$ is singular;
\item $\mu \ll \nu $, and $\mu \ll dx$, so $d\mu=u \,dx$ where $u\in L^1_{\text{loc}}$;
\item  $(u,v) \in A_p$ and $u(x) \leq Cv(x)$ a.e.
\end{enumerate} 
 
\end{theorem}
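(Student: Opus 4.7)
The approach is to extract from \eqref{eq:necessity averaging operators measures 1} a single pointwise testing inequality and deduce all three conclusions from it. Inserting $f = \chi_S$ for any Borel $S \subset Q$, the hypothesis collapses to
\begin{equation*}
\mu(Q)\left(\frac{|S|}{|Q|}\right)^p \leq C^p \nu(S) \qquad \text{for every cube } Q \text{ and every Borel } S \subset Q.
\end{equation*}
Part (1) is then immediate: since $\nu$ is a positive regular Borel measure it is locally finite, so the Lebesgue decomposition $d\nu = d\nu_s + v\,dx$ automatically has $v \in L^1_{\text{loc}}$.

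The key sharpening is to pick $S$ avoiding the singular support of $\nu$. Let $N$ be a Borel, Lebesgue null set on which $\nu_s$ is concentrated. Choosing $S = Q \setminus N$ gives $|S| = |Q|$ and $\nu(S) = \int_Q v\,dx$, so that
\begin{equation*}
\mu(Q) \leq C^p \int_Q v\,dx \qquad \text{for every cube } Q.
\end{equation*}
Writing an arbitrary open $U$ as a disjoint union of half-open dyadic cubes and using that the dyadic grid has Lebesgue null boundary, this upgrades to $\mu(U) \leq C^p \int_U v\,dx$. Given any Borel $E$ with $|E|=0$ and any $R > 0$, outer regularity of Lebesgue measure yields open sets $U \supset E \cap B(0,R)$ contained in $B(0,2R)$ with $|U|$ arbitrarily small, and absolute continuity of $\int v\,dx$ on $B(0,2R)$ then forces $\mu(E \cap B(0,R)) = 0$; letting $R \to \infty$ gives $\mu \ll dx$. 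Local finiteness of $\mu$ produces $d\mu = u\,dx$ with $u \in L^1_{\text{loc}}$, and applying the Lebesgue differentiation theorem to $\mu(Q) \leq C^p \int_Q v\,dx$ at almost every point yields the pointwise bound $u \leq C^p v$, which in turn gives $\mu \leq C^p \nu$ and hence $\mu \ll \nu$. This completes (2) and the pointwise half of (3).

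For the $A_p$ condition, restrict the testing inequality to $S \subset Q \setminus N$: since $|N|=0$ and $\nu$ restricted to $Q \setminus N$ equals $v\,dx$, it becomes
\begin{equation*}
u(Q)\left(\frac{|S|}{|Q|}\right)^p \leq C^p \int_S v\,dx,
\end{equation*}
which is precisely the testing condition used in the necessity direction of Theorem~\ref{thm:Ap neccessity averaging operators}; that theorem then delivers $(u,v) \in A_p$. The only technical subtlety in the whole argument is the covering step used to pass from the cube-wise bound $\mu(Q) \leq C^p \int_Q v\,dx$ to the open-set bound $\mu(U) \leq C^p \int_U v\,dx$; once one verifies that dyadic cube boundaries are Lebesgue null and contribute nothing to $\int v\,dx$, the remainder of the proof is a routine combination of Lebesgue differentiation, absolute continuity of the integral, and Theorem~\ref{thm:Ap neccessity averaging operators}.
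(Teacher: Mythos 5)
Your proof is correct and follows essentially the same route as the paper: test with indicator functions, strip off the singular support of $\nu$ to reduce to the testing inequality $u(Q)\left(\frac{|S|}{|Q|}\right)^p \le C^p \int_S v\,dx$, and conclude via the Lebesgue differentiation theorem together with the known testing characterization of two-weight $A_p$. The only organizational difference is that you obtain $\mu \ll \nu$ as a corollary of the pointwise bound $u \le C^p v$, whereas the paper proves $\mu\ll\nu$ first by a direct dyadic covering argument using the regularity of $\nu$; both routes are valid.
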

\begin{proof}
Fix a cube $Q \subset \mathbb{R}^n$ and let $ S \subset Q$ be measurable. Let $f=\chi_S$ in \eqref{eq:necessity averaging operators measures 1}; then arguing as before we obtain
\begin{equation}\label{eq:necessity averaging operators measures 2}
\left(\frac{|S|}{|Q|} \right)^p \mu(Q) \leq C \nu(S).
\end{equation}

Suppose $\nu$ were singular with respect to Lebesgue measure and
$|S|>0$. Then there exists a set $A \subset \mathbb{R}^n$ such that
$|A|=0$ and $\nu(S)= \nu(S \cap A)$. If we replace $S$ with
$ S \setminus A$ in \eqref{eq:necessity averaging operators measures
  2}, we find $ \mu(Q)=0$ for any cube $Q \supset S$. This implies
that $\mu=0$.  Hence, $ d\nu= d\nu_s + v \; dx$ where
$v \in L^1_{\text{loc}}$ $v \neq 0$, and $\nu_s$ is singular.

Now fix any set $S \subset Q$ with $ \nu(S)=0$. Since $\nu$ is
regular, for any $\epsilon>0$ there exists an open set $E \supset S$
such that $\nu(E) < \epsilon$. Since $E$ is open, $ E = \cup_j Q_j$
where $ \{Q_j\}$ is a disjoint collection of dyadic cubes. If we let
$Q=S=Q_j$ in \eqref{eq:necessity averaging operators measures 2}, we
have
$$ \mu(S) \leq  \mu(E)= \sum_j  \mu(Q_j) \leq C \sum_j \nu(Q_j)= C \nu(E) < C \epsilon.$$
Since $\epsilon>0$ was arbitrary we have $\mu(S)=0$, and so $ \mu \ll \nu$. 

	We can now write \eqref{eq:necessity averaging operators measures 1} as
\begin{equation}
\mu(Q) \left\lvert \avgint_Q f \; dx \right\rvert^p \leq C \left( \int_{\mathbb{R}^n} |f|^p \; d\nu_s + \int_{\mathbb{R}^n} |f|^p v \; dx \right).
\end{equation}
Let $A= \supp( \nu_s)$. Since $|A|=0$, if we set $ f = \chi_{S \setminus A}$, we have\\
 
\begin{equation}\label{eq:necessity averaging operators measures 3}
\left( \frac{|S|}{|Q|} \right)^p \mu(Q) \leq C v(S).
\end{equation}
Using the same argument that showed $\mu \ll \nu$, replacing $\nu$
with $v$, we can see that $ \mu \ll vdx \ll dx$. Hence, $d\mu=u dx$ for
some $u\in L^1_{\text{loc}}$. Let $S=Q$ in \eqref{eq:necessity
  averaging operators measures 3}, then by the Lebesgue
differentiation theorem we have that $u(x) \leq C v(x)$ a.e.
Moreover, by \eqref{eq:necessity averaging operators measures 3} we
have that
$$ \left(\frac{|S|}{|Q|} \right)^p u(Q) \leq C v(S).$$
Hence, the fact that $(u,v) \in A_p$ follows as in the proof of Theorem \ref{thm:Ap neccessity averaging operators}.
\end{proof}

\begin{proof}[Proof of Theorem \ref{thm:necessity measures}.]
  We will show that given any cube $Q$ the averaging operator
  satisfies $A_{Q}: L^p(\nu) \rightarrow L^p(\mu)$. The desired
  conclusion then follows from Theorem \ref{thm: necessity averaging
    operators measures}. Choose a constant $t \geq 4$ such that
  $2C_0(1+2^{n+ \delta})t^{-\delta} \leq a$. Here,  $a$ is the constant
  in \eqref{eq:nondegeneracy}, and $\delta,C_0$ are as in
  \eqref{eq:smoothness}. We further require
  $t= \frac{N C_2}{\sqrt{n}}$, where
  $ \frac{1}{\sqrt{n}} \leq C_2 \leq 1$ and $N$ is an integer. The
 exact choice of the constant $C_2$ will be made clear below. Let $x_0,\,y_0$ be
  two points satisfying $x_0-y_0=tr\sqrt{n} u_0$, $r>0$, and consider
  the cubes $Q(x_0,r)$, $Q(y_0,r)$. Given any point $ x \in Q(x_0,r)$
  we can write $x= x_0 + h$, where $|h| < r \sqrt{n}$. Similarly,
  given $y \in Q(y_0,r)$, $ y= y_0+k$ where $|k| < r \sqrt{n}$.  We
  claim that for such $x$ and $y$,
\begin{equation}\label{eq:necessity measures 1}
| K(x,y)-K(x_0,y_0)| \leq \frac{1}{2} | K(x_0,y_0)|.
\end{equation}
To prove this we will apply \eqref{eq:smoothness} which is possible since $|h|< r\sqrt{n} \leq \frac{1}{2}|x_0-y_0|$, and
$$|x_0+h-y_0| \geq |x_0-y_0| -|h| \geq tr\sqrt{n} - r \sqrt{n} \geq \frac{t}{2} r \sqrt{n} \geq 2 |k|.$$
Thus, we can estimate as follows:
\begin{align} \label{eq:necessity measures 2}
|K(x,y)&- K(x_0,y_0)|\\
& \leq |K(x_0+h, y_0+k)-K(x_0+h,y_0)|+|K(x_0+h,y_0)-K(x_0,y_0)|\nonumber \\
& \leq  \frac{C_0|k|^\delta}{|x_0+h-y_0|^{n+\delta}} + \frac{C_0|h|^\delta}{|x_0-y_0|^{n+\delta}}\nonumber \\
&= I_1 + I_2 \nonumber.
\end{align}
We can bound $I_2$ immediately:
$$ I_2 \leq \frac{C_0(r \sqrt{n})^\delta}{(tr\sqrt{n})^\delta |x_0-y_0|^n} = C_0 \frac{ t^{-\delta}}{|x_0-y_0|^n}.$$
To estimate $I_1$, note that 
$$|x_0+h-y_0| \geq \frac{t}{2} r\sqrt{n}= \frac{1}{2}|x_0-y_0|.$$
Hence,
$$ I_1 \leq \frac{ C_0 2^{n+\delta}( r\sqrt{n})^\delta}{(tr\sqrt{n})^\delta|x_0-y_0|^n}= C_0\frac{ 2^{n+\delta}t^{-\delta}}{|x_0-y_0|^n }.$$
If we combine these estimates, by our choice of $t$ and \eqref{eq:nondegeneracy} we have
$$I_1+I_2 \leq \frac{a}{2} \frac{1}{|x_0-y_0|^n} \leq \frac{1}{2}|K(x_0,y_0)|,$$
which proves \eqref{eq:necessity measures 1}. 

It now follows that for any $ x\in Q(x_0,r)$, $ y\in Q(y_0,r)$, the
kernel $K(x,y)$ always has the same sign. Therefore, if we fix a
non-negative function $f$ with $\supp(f)\subset Q(x_0,r) $, then

\begin{align}\label{eq:necessity measures 4}
|Tf(y)|&= \left\lvert \int_{Q(x_0,r)} K(x,y) f(x) \; dx  \right\rvert \\
&= \int_{Q(x_0,r)} |K(x,y)| f(x) \; dx  \nonumber\\
& \geq \int_{Q(x_0,r)} |K(x_0,y_0)|f(x) \; dx - \int_{Q(x_0,r)} |K(x,y) -K(x_0,y_0)|f(x) \; dx; \nonumber\\
\intertext{again by  \eqref{eq:necessity measures 1}  and  \eqref{eq:nondegeneracy},\nonumber}
& \geq \frac{1}{2}|K(x_0,y_0)| \int_{Q(x_0,r)} f(x) \; dx \nonumber\\
& \geq \frac{a}{2|x_0-y_0|^n} \int_{Q(x_0,r)} f(x) \; dx \nonumber \\
& \geq  \frac{a}{2(tr \sqrt{n})^n} \int_{Q(x_0,r)} f(x) \; dx \nonumber\\
&=  c(a,t,n) \avgint_{Q(x_0,r)} f(x) \; dx.\nonumber
\end{align} 

Given this inequality and the assumption that $T$ satisfies a weak $(p,p)$ inequality  we have for any $0<\lambda< c(a,t,n) \avgint_{Q(x_0,r)} f \; dx$:
$$\mu(Q(y_0,r)) \leq  \mu( \{x:|Tf(y)| > \lambda \} \leq \frac{C}{\lambda^p} \int_{Q(x_0,r)} |f|^p \; d\nu.$$ 
If we take the supremum over all such $\lambda$, we get
\begin{equation}\label{eq:necessity measures 3}
\mu(Q(y_0,r)) \left(\avgint_{Q(x_0,r)} f \; dx \right)^p \leq c(a,t,n,p) \int_{Q(x_0,r)} |f|^p \; d\nu.
\end{equation}

Now fix a value of $C_2$, depending only on $u_0$, so that starting
from $Q(y_0,r)$ we can form a chain of adjacent cubes $Q(x_j,r)$,
$j=1, \dots N$ in the direction $-u_0$ such that $x_1=y_0$ and
$x_N=x_0$. Each $Q(x_j,r)$ satisfies
$ \mu(Q(x_{j+1},r)) \leq C_\mu \mu(Q(x_j ,r))$, where where $C_\mu$ is
the directional doubling constant from~\eqref{eq:directionaldoubling}.  The number of cubes $N$,
lying between $Q(y_0, r)$ and $Q(x_0,r)$ depends only on $t$ and $n$.
Thus, there exists constant $ C=C(C_\mu,t,n)$ such that
$ \mu(Q(x_0,r)) \leq C \mu(Q(y_0,r))$. Hence,
\begin{align*}
\mu(Q(x_0,r))\left(\avgint_{Q(x_0,r)} f \; dx \right)^p \leq C \mu(Q(y_0,r))  \left( \avgint_{Q(x_0,r)} f  \; dx \right)^p \leq C \int_{Q(x_0,r)} |f|^p \; d\nu.
\end{align*}
Since the resulting constant depends only on $C_1,\,p,\,t,\,n,\,a$ and not on
$Q(x_0,r)$ we have shown that the averaging operators
$A_Q : L^p( \nu) \rightarrow L^{p}(\mu)$ uniformly for all
$Q$. Therefore, by Theorem \ref{thm: necessity averaging operators
  measures} we get the desired conclusion.
\end{proof}

\section{Proofs of Theorems \ref{thm:necessity Tsigma} and \ref{thm:necessity Tailed Ap}}

Before proceeding with the proof of Theorem \ref{thm:necessity Tsigma} we first define the related averaging operator. 

\begin{defi}\label{def:averaging operator Asigma}
  Given a non-negative measure $\sigma$ and a cube $Q$, define the
  averaging operator $A_{Q,\sigma}$ acting on a function
  $f\in L^1_{\text{loc}}(\sigma)$ by
\begin{equation} \label{eq:Asigma}
A_{Q,\sigma} f(x) = \frac{1}{|Q|} \int_Q f(y) \; d\sigma(y) \chi_Q(x).
\end{equation}
\end{defi}

The following result characterizes the $L^p$ boundedness of $A_{Q,\sigma}$.
\begin{theorem}\label{thm: Neccessity Asigma}
Given  a cube $Q$, $1 \leq p < \infty$, and a pair of positive regular
Borel measures $(\mu, \sigma)$, suppose that $(\mu,\sigma)$ satisfy
the $A_p$ condition \eqref{eq:Ap condition for measures}.  
 Then for all $f \in L^p(\sigma)$,
\begin{equation*} 
 \|A_{Q,\sigma} f \|_{L^p(\mu)} \leq [\mu,\sigma]_{A_p}^{1/p} \|f\|_{L^p(\sigma)}.
\end{equation*}
Conversely given $1\leq p<\infty$, if $(\mu,\sigma)$ is a pair of
positive regular Borel measures such that for every cube $Q$,
\begin{equation}\label{eq:Neccessity Asigma 1}
\|A_{Q,\sigma} f \|_{L^p(\mu)} \leq K \|f\|_{L^p(\sigma)},
\end{equation}
then $(\mu,\sigma) \in A_p$. Moreover $[\mu,\sigma]_{A_p} \leq K^p .$
\end{theorem}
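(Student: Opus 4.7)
The plan is to mirror the proof of Theorem \ref{thm:Ap neccessity averaging operators}, with $d\sigma$ replacing $dx$ as the measure underlying the averaging. Sufficiency will follow from a single application of H\"older's inequality on the inner integral, after which the product $\frac{\mu(Q)}{|Q|}\bigl(\frac{\sigma(Q)}{|Q|}\bigr)^{p-1}$ appears as the exact constant; necessity will come from testing \eqref{eq:Neccessity Asigma 1} against the simple function $f=\chi_Q$.

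For sufficiency in the range $1<p<\infty$, first rewrite
\begin{equation*}
\|A_{Q,\sigma} f\|_{L^p(\mu)}^p = \left(\frac{1}{|Q|}\int_Q f\,d\sigma\right)^p \mu(Q).
\end{equation*}
H\"older's inequality with respect to $d\sigma$ gives
\begin{equation*}
\int_Q f\,d\sigma \leq \left(\int_Q f^p\,d\sigma\right)^{1/p}\sigma(Q)^{1/p'}.
\end{equation*}
Substituting this in and grouping the factors of $|Q|$ and $\sigma(Q)$ yields
\begin{equation*}
\|A_{Q,\sigma} f\|_{L^p(\mu)}^p \leq \frac{\mu(Q)}{|Q|}\left(\frac{\sigma(Q)}{|Q|}\right)^{p-1}\int_Q f^p\,d\sigma \leq [\mu,\sigma]_{A_p}\,\|f\|_{L^p(\sigma)}^p,
\end{equation*}
which is the desired inequality. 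The endpoint $p=1$ would be handled by an analogue of the $A_1$ computation in Theorem \ref{thm:Ap neccessity averaging operators}, phrased with an essential infimum taken with respect to $\sigma$.

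For necessity, test \eqref{eq:Neccessity Asigma 1} with $f=\chi_Q$. A direct computation gives
\begin{equation*}
\|A_{Q,\sigma}\chi_Q\|_{L^p(\mu)}^p = \left(\frac{\sigma(Q)}{|Q|}\right)^p \mu(Q), \qquad \|\chi_Q\|_{L^p(\sigma)}^p = \sigma(Q),
\end{equation*}
so \eqref{eq:Neccessity Asigma 1} immediately produces $\frac{\mu(Q)}{|Q|}\bigl(\frac{\sigma(Q)}{|Q|}\bigr)^{p-1}\leq K^p$ whenever $\sigma(Q)>0$, while the inequality is vacuous when $\sigma(Q)=0$. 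Taking the supremum over cubes $Q$ yields $[\mu,\sigma]_{A_p}\leq K^p$.

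There is no real obstacle in this argument, and the proof is essentially routine. The only point requiring a moment's thought is the degenerate case $\sigma(Q)=0$ in the necessity half, which is handled automatically since the $A_p$ product vanishes there; no separate regularity or differentiation argument (like that needed in Theorem \ref{thm: necessity averaging operators measures}) is required, because here $\sigma$ plays the role of both the domain and the reference measure inside the averages.
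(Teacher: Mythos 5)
Your proof is correct and follows essentially the same route as the paper's: necessity by testing \eqref{eq:Neccessity Asigma 1} with $f=\chi_Q$ and dividing by $\sigma(Q)$ (with the same disposal of the degenerate case $\sigma(Q)=0$), and sufficiency by a single application of H\"older's inequality with respect to $d\sigma$, which the paper omits as being analogous to Theorem \ref{thm:Ap neccessity averaging operators}. One small correction to your deferred remark: at $p=1$ the condition \eqref{eq:Ap condition for measures} degenerates to $\sup_Q \mu(Q)/|Q|<\infty$, so sufficiency there is just the identity $\|A_{Q,\sigma}f\|_{L^1(\mu)}=\frac{\mu(Q)}{|Q|}\int_Q|f|\,d\sigma\leq[\mu,\sigma]_{A_1}\|f\|_{L^1(\sigma)}$; no essential infimum with respect to $\sigma$ enters, since there is no density to take it of.
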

\begin{proof}
  We first prove necessity. Fix a cube $Q$ and let
  $1 \leq p < \infty$. If $\sigma(Q)=0$, then \eqref{eq:Ap condition
    for measures} is immediate. If $\sigma(Q)>0$ let $f=\chi_Q$ in
  \eqref{eq:Neccessity Asigma 1}. Then we obtain
  $$ \mu(Q) \left( \frac{\sigma(Q)}{|Q|} \right)^p \leq K^p \sigma(Q).$$
 Dividing by $\sigma(Q)$ and taking the supremum over all cubes $Q$ we have $(\mu,\sigma) \in A_p$ and $[\mu,\sigma]_{A_p} \leq K^p$. The proof of sufficiency is similar to Theorem \ref{thm:Ap neccessity averaging operators} so we omit the details.
\end{proof}

We can now prove Theorems~\ref{thm:necessity Tsigma}
and~\ref{thm:necessity Tailed Ap}.

\begin{proof}[Proof of Theorem \ref{thm:necessity Tsigma}]
  The proof is a straightforward modification of
  the proof of Theorem \ref{thm:necessity measures}.  Fix the cubes
  $Q(x_0,r)$ and $Q(y_0,r)$ as before.    Then with the same notation
  as before, we have that if $x\in Q(x_0,r)$ and $y\in Q(y_0,r)$,
  \[ |x-y|= |x_0-y_0+h-k| < tr\sqrt{n} +2r\sqrt{n} < 2tr\sqrt{n}. \]
  Similarly, we have $|x-y|> \tfrac{1}{2}tr\sqrt{n}$.  Therefore, if
  we choose $0<\epsilon < \tfrac{1}{4}tr\sqrt{n}$ and $R>4tr\sqrt{n}$,
  we have that the kernel $K_{\epsilon,R}(x,y)=K(x,y)$ and so
  satisfies the non-degeneracy condition~\eqref{eq:nondegeneracy} with
  a uniform constant.  We also have that it satisfies the standard
  estimates~\eqref{eq:size} and~\eqref{eq:smoothness}.

  For
  simplicity, we now  write $T_\sigma$ instead of
  $T_{\sigma}^{\epsilon,R}$ and $K$ for $K_{\epsilon,R}$. 
 If we repeat the previous argument, we have that $K$ satisfies
 the estimate \eqref{eq:necessity measures
    1}. We can then repeat the proof of \eqref{eq:necessity
    measures 4}, using the fact that $T_\sigma$ satisfies the weak
  $(p,p)$ inequality with uniform constant, to get
\begin{equation*}
|T_\sigma f(y) | \geq \frac{c(a,t,n)}{|Q(x_0,r)|} \int_{Q(x_0,r)} f(x) \; d\sigma(x).
\end{equation*}
Given this inequality we continue to argue as we did in the proof of
Theorem \ref{thm:necessity measures} to get that the averaging
operator $A_{Q,\sigma}=A_{Q(x_0,r),\sigma}$ satisfies
$A_{Q,\sigma}: L^p(\sigma) \rightarrow L^p(\mu)$.  This estimate holds
for every cube $Q(x_0,r)$ with constants independent of $\epsilon$ and
$R$, and so
$(\mu,\sigma) \in A_p$ by Theorem \ref{thm: Neccessity Asigma}.
\end{proof}

\begin{proof}[Proof of Theorem \ref{thm:necessity Tailed Ap}]
  We adapt the proof of Theorem \ref{thm:necessity measures},
  exchanging the roles of $x_0$ and $y_0$. Fix a cube
  $Q(y_0,r)$. Choose $t \geq 4$ as in the proof of Theorem
  \ref{thm:necessity measures}. Rather than considering the cube
  $Q(x_0,r)$ we replace it with a ball.  Fix $S>r$ and for each $r\leq
  s \leq S$ let
  $B_s= B(x_s, s \sqrt {n})$, where $x_s=y_0+ts\sqrt{n}u_0$.
If we now argue as we did in the proof of Theorem~\ref{thm:necessity
  Tsigma}, if we fix $R>4tS\sqrt{n}$ and
$\epsilon<\tfrac{1}{4}tr\sqrt{n}$, then for $y\in Q(y_0,r)$ and $x\in
B_s$, $K_{\epsilon,R}$ satisfies the non-degeneracy
  condition~\eqref{eq:nondegeneracy} with a uniform constant.  We also
  have that it satisfies the standard estimates~\eqref{eq:size}
  and~\eqref{eq:smoothness}.  Again, we will write $T_\sigma$ for
  $T_{\sigma}^{\epsilon,R}$ and $K$ for $K_{\epsilon,R}$. 

 We can now argue as follows:   for
  all $y \in Q(y_0,r)$, $y=y_0 + k$ where $|k| \leq r \sqrt{n}$ and
  for $x\in B_s$, $x= x_s +h$ where $|h| \leq s\sqrt{n}$. As in the
  proof of Theorem \ref{thm:necessity measures} we have
  $|h| \leq s\sqrt{n} \leq \frac{1}{2}|x_s-y_0|$, and
$$|x_s+h-y_0| \geq |x_s -y_0| - |h| \geq ts \sqrt{n}- s\sqrt{n} \geq \frac{t}{2} s \sqrt{n} \geq \frac{t}{2} r \sqrt{n} \geq 2|k|.$$
We can now apply \eqref{eq:smoothness} as in estimate
\eqref{eq:necessity measures 2} to get that for $y \in Q_r$ and $ x \in B_s$,
\begin{equation}
|K(x,y)-K(x_s,y_0)| \leq \frac{1}{2} |K(x_s,y_0)|.
\end{equation}
 This implies that for any $y \in Q(y_0,r)$ and $x\in B_s$ $K(x,y)$ always has the same sign. Moreover, we have
\begin{equation}
|K(x,y)| \geq \frac{1}{2} |K(x_s, y_0)|.
\end{equation}
Therefore,
\begin{multline}
|K(x,y)| \geq  \frac{1}{2}|K(x_s,y_0)| \geq \frac{a}{2} \frac{1}{|x_s-y_0|^n} 
 \geq \frac{a}{2} \frac{1}{(|x-x_s|+|x-y_0|)^n} \\
\geq c(a,n) \frac{1}{(|x-y_0|)^n} 
\geq c(a,n) \frac{1}{(|x-y_0|+r)^n};
\end{multline}
the second to last inequality follows since $|x-x_s| \leq s \sqrt{n} \leq ts \sqrt{n}= |x-y_0|$. 

 Define the truncated cone
$$C_r = \bigcup_{s \geq r} B_s .$$
Notice $C_r$ has a central axis of $y_0 + su_0$, for $s \geq r$.
For $S>0$ let $$f_{r,S}(x)= \left(\frac{1}{|x-y_0| + r)} \right)^{n(p'-1)} \chi_{C_r \cap B(y_0,S)}.$$

Then for all $ y \in Q(x_0,r)$ we have
\begin{multline}\label{eq: necessity Tailed Ap 1}
|T_\sigma f_{r,S}(y)| = \int_{C_r \cap B(y_0,S)} |K(x,y)|
\left(\frac{1}{(|x-y_0| + r )}\right)^{n(p'-1)} \; d\sigma(x) \\
\geq c(a,n) \int_{C_r \cap B(y_0,S)}
\left( \frac{1}{(|x-y_0|+ r)^{p'} } \right)^n d \sigma(x).
\end{multline}

We have that $T_\sigma$ satisfies the weak $(p,p)$ inequality with
uniform constant, so we can argue as we did to derive
\eqref{eq:necessity measures 3} in the proof of Theorem
\ref{thm:necessity measures} to get
\begin{multline*}
  \mu(Q(y_0,r)) \left( \int_{C_r \cap B(y_0,S)}
    \left( \; \frac{1}{( |x-y_0|+ r)^{p'} } \right)^{n} d\sigma(x)
  \right)^p \\
  \leq C \int_{C_r\cap B(y_0,S)}
  \left(\frac{1}{|x-y_0| +r } \right)^{np(p'-1)} d\sigma(x).
\end{multline*}
Since $p(p'-1)=p'$ we have 
\begin{equation*}
\frac{\mu(Q(y_0,r))}{|Q(y_0,r)|}  \left( \int_{C_r \cap B(y_0,S)} \left(\frac{r^{p'-1}}{(|x-y_0|+r)^{p'}} \right)^n d\sigma(x) \right)^{p-1} \leq C.
\end{equation*}
Since the constant $C$ is independent of $\epsilon$ and $R$, and so of
$S$,  we can
take the limit as  $S\rightarrow \infty$, and by the monotone
convergence theorem we get
\begin{equation}\label{eq: necessity Tailed Ap 2}
\frac{\mu(Q(y_0,r))}{|Q(y_0,r)|}  \left( \int_{C_r} \left(\frac{r^{p'-1}}{(|x-y_0|+r)^{p'}} \right)^n d\sigma(x) \right)^{p-1} \leq C.
\end{equation}

\medskip

We will now extend inequality \eqref{eq: necessity Tailed Ap 2} to all
of $\mathbb{R}^n$. Let
$$A_k= B(y_0, 2^{k+1} tr \sqrt{n}) \setminus B(y_0, 2^k tr
\sqrt{n}).$$ Consider the ball $B(x_k, 2^{k+2}tr \sqrt{n})$, where
 $$ x_k= y_0 + (\frac{2^{k+1} +2^{k}}{2})tr \sqrt{n} u_0=  y_0+ \tfrac{3}{8}(2^{k+2})tr \sqrt{n} u_0.$$
This is the ball of radius $2^{k+2}tr \sqrt{n}$ centered at the midpoint of the portion of the central axis of $C_r$  that lies inside $A_k$. We claim $A_k \subset B(x_k,2^{k+2} tr \sqrt{n})$. To see this, fix $x\in A_k$; then  
$$|x-x_k| \leq |x-y_0|+|x_k -y_0| \leq 2^{k+1}tr\sqrt{n}+ \tfrac{3}{8} (2^{k+2})tr\sqrt{n}  \leq 2^{k+2} tr\sqrt{n}.$$
Since the ball $B(x_k, \frac{3}{8} (2^{k+2}) r \sqrt{n})$ is one of the balls $B_s$ that defines $C_r$, it is immediate that 
$$ \bigcup^\infty_{k=0} B(x_k, \tfrac{3}{8} (2^{k+2}) r\sqrt{n}) \subset C_r. $$
Since $\sigma$ is doubling there exists a constant $C=C(t,n, \sigma)$ such that 
$$ \sigma(B(x_k, 2^{k+2} tr \sqrt{n})) \leq C \sigma \left(B(x_k, \tfrac{3}{8} (2^{k+2})r \sqrt{n}) \right). $$

Hence, we can estimate as follows:
\begin{align*}
&\frac{\mu(Q(y_0,r))}{|Q(y_0,r)|} \left( \int_{\mathbb{R}^n \setminus B(y_0 ,tr\sqrt{n})} \left( \frac{r^{p'-1}}{(|x-y_0| + r)^{p'}} \right)^n d\sigma(x) \right)^{p-1} \\
& \quad= \frac{\mu(Q(y_0,r))}{|Q(y_0,r)|}   \left ( \sum^\infty_{k=0}\int_{A_k}  \left( \frac{r^{p'-1}}{(|x-y_0| + r)^{p'}} \right)^n d\sigma(x) \right)^{p-1}  \\
& \quad\leq \frac{\mu(Q(y_0,r))}{|Q(y_0,r)|}  \left(\sum^\infty_{k=0}    \left( \frac{r^{p'-1}}{(2^ktr\sqrt{n} + r)^{p'}} \right)^n \sigma(A_k) \right)^{p-1} \\
& \quad\leq   \frac{\mu(Q(y_0,r))}{|Q(y_0,r)|}  \left( \sum_{k=0}^\infty\left(\frac{r^{p'-1}}{(2^ktr\sqrt{n} + r)^{p'}} \right)^n \sigma(B(x_k, 2^{k+2} tr\sqrt{n} )) \right)^{p-1} \\
& \quad \leq C  \frac{\mu(Q(y_0,r))}{|Q(y_0,r)|}  \left( \sum_{k=0}^\infty\left(\frac{r^{p'-1}}{(2^ktr\sqrt{n} + r)^{p'}} \right)^n \sigma(B(x_k, \tfrac{3}{8}(2^{k+2}) r\sqrt{n} )) \right)^{p-1} \\
& \quad\leq C \frac{\mu(Q(y_0,r))}{|Q(y_0,r)|}  \left( \sum_{k=0}^\infty\int_{B(x_k, \frac{3}{8}(2^{k+2})r\sqrt{n})}\left(\frac{r^{p'-1}}{(|x-y_0| + r)^{p'}} \right)^n d\sigma(x) \right)^{p-1} \\
& \quad \leq C \frac{\mu(Q(y_0,r))}{|Q(y_0,r)|}  \left( \int_{C_r} \left(\frac{r^{p'-1}}{(|x-y_0|+r)^{p'}} \right)^n d\sigma(x) \right)^{p-1} \\
& \quad \leq C.
\end{align*}
The third to last inequality holds since $2^{k}tr\sqrt{n} \geq
\frac{1}{2} |x-y_0|$ for any $x \in B(x_k,
\tfrac{3}{8}(2^{k+2})r\sqrt{n})$.

By Remark \ref{remark: Ap balls cubes}, we can apply the result of
Theorem \ref{thm:necessity Tsigma} to the ball $B(y_0, tr\sqrt{n})$ to
get
\begin{align*}
 & \left(\frac{\mu(Q(y_0,r))}{|Q(y_0,r)|}\right)
  \left(\int_{B(y_0, tr\sqrt{n})}
  \left(  \frac{r^{p'-1}}{(|x-y_0|+r)^{p'}} 
  \right)^n d\sigma(x) \right)^{p-1}\\
& \qquad \qquad \qquad  \leq \left(\frac{\mu(Q(y_0,r))}{|Q(y_0,r)|}\right) \left(
   \frac{\sigma(B(y_0, tr\sqrt{n}))}{|B(y_0,tr
     \sqrt{n})|}\right)^{p-1} \\
& \qquad \qquad \qquad  \leq \left(\frac{ \mu(B(y_0,
                                   tr\sqrt{n}))}{|B(y_0,tr\sqrt{n})|}
                                   \right) \left( \frac{\sigma(B(y_0,
                                   tr\sqrt{n}))}{|B(y_0,tr
                                   \sqrt{n})|}\right)^{p-1} \\
& \qquad \qquad \qquad   \leq C.
\end{align*}
If we 
combine this inequality with the previous estimate, we get 
 $$\left(\frac{\mu(Q(y_0,r))}{|Q(y_0,r)|}\right)
 \left(\int_{\mathbb{R}^n} \left( \frac{r^{p'-1}}{(|x-y_0|+r)^{p'}}
   \right)^n d\sigma(x) \right)^{p-1} \leq C. $$
 Since this holds for every cube $Q(y_0,r)$, it follows that
 $(\mu,\sigma)$ satisfy the $PA_p$ condition.
\end{proof}

\section{Strong $(1,1)$ Inequalities}

For the proof of Theorem \ref{thm:strong 1-1} we first give some
preliminary lemmas.

\begin{lemma}\label{lem:essential infimum}
 Let $v$ be a measurable function.  Then for a.e. $x \in \mathbb{R}^n$,
\begin{equation}\label{eq:essential infimum 1}
 \lim_{r \rightarrow 0^+} [\essinf_{ y \in Q(x,r)} v(y)] \leq v(x). 
 \end{equation}
\end{lemma}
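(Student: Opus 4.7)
The plan is to combine the monotonicity of the essential infimum under shrinking sets with the Lebesgue density theorem applied to sublevel sets of $v$. First I would observe that if $0<r_1<r_2$, then $Q(x,r_1)\subset Q(x,r_2)$, so $\essinf_{y\in Q(x,r_1)}v(y)\geq \essinf_{y\in Q(x,r_2)}v(y)$. Thus the limit in \eqref{eq:essential infimum 1} exists in $[-\infty,\infty]$ and equals $\sup_{r>0}\essinf_{y\in Q(x,r)}v(y)$; to prove \eqref{eq:essential infimum 1} at a point $x$ with $v(x)<\infty$, it suffices to produce, for every $\epsilon>0$, arbitrarily small $r>0$ for which $\essinf_{y\in Q(x,r)}v(y)<v(x)+\epsilon$.

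For each rational $q$, set $E_q=\{y\in\mathbb{R}^n:v(y)<q\}$, which is measurable since $v$ is. By the Lebesgue density theorem there is a null set $N_q$ such that every point of $E_q\setminus N_q$ is a point of density of $E_q$. Let $N=\bigcup_{q\in\mathbb{Q}}N_q$; this is a countable union of null sets, hence $|N|=0$.

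Now fix $x\notin N$ with $v(x)<\infty$ (if $v(x)=\infty$ the inequality is automatic). Given $\epsilon>0$, pick a rational $q$ with $v(x)<q<v(x)+\epsilon$. Then $x\in E_q\setminus N_q$, so $x$ is a density point of $E_q$; in particular, $|Q(x,r)\cap E_q|>0$ for all sufficiently small $r>0$. Since $v<q$ on a set of positive measure inside $Q(x,r)$, we conclude $\essinf_{y\in Q(x,r)}v(y)\leq q<v(x)+\epsilon$. Passing to the limit in $r$ and then letting $\epsilon\to 0$ gives \eqref{eq:essential infimum 1} for every $x\notin N$.

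There is no substantive obstacle; the only delicate point is that the exceptional set $N$ must be assembled uniformly over the rationals before $x$ is fixed, which is why I take the union over countably many $q\in\mathbb{Q}$ rather than trying to handle each $x$ with a separate exceptional set.
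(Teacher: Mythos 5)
Your argument is correct and complete. Note that the paper does not actually supply a proof of this lemma: it simply remarks that the statement is implicit in Muckenhoupt--Wheeden \cite[Theorem 4]{MR0417671}, so there is nothing in the text to compare against line by line. Your route --- applying the Lebesgue density theorem to the sublevel sets $E_q=\{v<q\}$ for rational $q$ and assembling a single null exceptional set $N=\bigcup_q N_q$ before fixing $x$ --- is a standard and fully rigorous way to establish the claim, and you correctly handle the two small points that matter: the monotonicity of $\essinf_{Q(x,r)}v$ in $r$ (so that the limit exists and it suffices to bound it along arbitrarily small $r$), and the uniformity of the exceptional set over the countably many rationals. An equally common alternative, closer in spirit to how such facts are usually extracted from the Lebesgue differentiation theorem, is to note that $\essinf_{Q(x,r)}v\leq\essinf_{Q(x,r)}\min(v,M)\leq\aver{Q(x,r)}\min(v,M)\,dy\to\min(v(x),M)$ at a.e.\ $x$, and then let $M\to\infty$ through the integers; this avoids density points at the cost of truncating $v$. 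Either way the conclusion holds, and your proof would serve perfectly well as the omitted proof of the lemma (in the intended application $v$ is a non-negative weight, so the degenerate case $v(x)=-\infty$ you implicitly leave aside does not arise).
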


The proof of Lemma \ref{lem:essential infimum} is implicit in
\cite[~Theorem 4]{MR0417671} in one dimension; the proof is the same
in higher dimensions.

\begin{defi}
A family $\{E_r\}_{r>0}$ of Borel subsets of $\mathbb{R}^n$ is said to shrink nicely to $x \in \mathbb{R}^n$ if 
$$ E_r \subset B(x,r) \quad \text{for each}\; r,$$
and there exists a constant $\alpha$ independent of $r$ such that 
$$|E_r|> \alpha | B(x,r)|.$$ 
\end{defi}
\begin{lemma}\label{lem:differentiation theorem}
Let $\mu$ be a  regular Borel measure on $\mathbb{R}^n$, and let $d\mu=d\mu_s +u\; dx$ be its Lebesgue Radon-Nikodym decomposition. Then for a.e. $x \in \mathbb{R}^n$, 
$$ \lim_{r \rightarrow 0} \frac{ \mu(E_r)}{|E_r|} = u(x).$$  
\end{lemma}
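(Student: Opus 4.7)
The plan is to split $\mu(E_r)/|E_r|$ via the Lebesgue--Radon--Nikodym decomposition as
$$\frac{\mu(E_r)}{|E_r|} = \frac{\mu_s(E_r)}{|E_r|} + \frac{1}{|E_r|}\int_{E_r} u(y)\,dy,$$
and show separately that the first summand tends to $0$ and the second to $u(x)$ for a.e.\ $x$. The key observation that lets me reduce each piece to the standard differentiation theorems on balls is that the nicely-shrinking hypothesis forces $E_r \subset B(x,r)$ and $|E_r| \geq \alpha |B(x,r)|$, so for any non-negative integrand $g$ and any positive Borel measure $\rho$ one has
$$\frac{1}{|E_r|}\int_{E_r} g\,d\rho \leq \frac{1}{\alpha\,|B(x,r)|}\int_{B(x,r)} g\,d\rho.$$

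For the absolutely continuous part I would apply this comparison with $g(y) = |u(y) - u(x)|$ and $d\rho = dy$. Since $u\in L^1_{\text{loc}}$, at any Lebesgue point $x$ of $u$ (hence for a.e.\ $x$),
$$\left|\frac{1}{|E_r|}\int_{E_r} u\,dy - u(x)\right| \leq \frac{1}{|E_r|}\int_{E_r}|u(y) - u(x)|\,dy \leq \frac{1}{\alpha\,|B(x,r)|}\int_{B(x,r)}|u(y)-u(x)|\,dy,$$
and the right-hand side tends to $0$ by the Lebesgue differentiation theorem.

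For the singular part, the same comparison gives $\mu_s(E_r)/|E_r| \leq \mu_s(B(x,r))/(\alpha|B(x,r)|)$, so it suffices to establish the classical fact that $\mu_s(B(x,r))/|B(x,r)| \to 0$ for Lebesgue-a.e.\ $x$. Since $\mu_s$ is singular with respect to Lebesgue measure, fix a Borel set $A$ with $|A| = 0$ and $\mu_s(\mathbb{R}^n \setminus A) = 0$. For $\lambda > 0$ and $k \in \mathbb{N}$, set
$$F_{\lambda,k} = \left\{ x \in (\mathbb{R}^n\setminus A)\cap B(0,k) : \limsup_{r\to 0^+}\frac{\mu_s(B(x,r))}{|B(x,r)|} > \lambda\right\}.$$
Given $\varepsilon > 0$, by outer regularity of $\mu_s$ applied to the $\mu_s$-null set $(\mathbb{R}^n\setminus A)\cap B(0,k+1)$ there is an open set $W$ containing it with $\mu_s(W)<\varepsilon$. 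For each $x\in F_{\lambda,k}$ and every sufficiently small $r$, the ball $B(x,r)$ lies in $W$; I can further select such an $r$ with $\mu_s(B(x,r))/|B(x,r)|>\lambda$. A Vitali covering argument then yields a countable disjoint subfamily $\{B_j\}$ of these balls whose fivefold dilates cover $F_{\lambda,k}$, giving
$$|F_{\lambda,k}| \leq 5^n \sum_j |B_j| \leq \frac{5^n}{\lambda}\sum_j \mu_s(B_j) \leq \frac{5^n \mu_s(W)}{\lambda} < \frac{5^n\varepsilon}{\lambda}.$$
Sending $\varepsilon \to 0$ gives $|F_{\lambda,k}| = 0$, and then a countable union over $\lambda = 1/m$ and $k\in\mathbb{N}$ finishes the argument.

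The whole proof is classical (compare Folland, \emph{Real Analysis}, Thm.\ 3.22); the only ingredient specific to nicely-shrinking sets is the uniform comparison with $B(x,r)$ through the constant $\alpha$, which funnels through both terms of the decomposition. I therefore do not anticipate any real obstacle beyond organizing these standard ingredients in the correct order.
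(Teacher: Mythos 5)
Your argument is correct and is precisely the classical proof of this differentiation theorem; the paper itself gives no proof but simply cites Folland, \emph{Real Analysis}, Theorem 3.22, which is the result you reconstruct (splitting off the singular part, comparing nicely shrinking sets to balls via the constant $\alpha$, and handling the singular part with a Vitali covering argument). The only cosmetic point is that $F_{\lambda,k}$ need not be known to be measurable a priori, so the covering estimate should be read as a bound on outer Lebesgue measure, which is standard and harmless.
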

The proof of Lemma \ref{lem:differentiation theorem} can be found in \cite[Theorem~3.22, p.99]{MR1681462}. 

\begin{proof}[Proof of Theorem \ref{thm:strong 1-1}.]
  First suppose that  the measure $\nu$ is singular with respect to Lebesgue
  measure. As in the proof of Theorem \ref{thm:necessity Tailed Ap}
  fix a cube $Q(y_0,r)$ and define the truncated cone $C_r$.  Let $f$
  be a non negative function with $\supp(f) \subset Q(y_0,r)$. Then, if
  we estimate as in the proof of Theorem \ref{thm:necessity Tailed Ap}
  to get \eqref{eq: necessity Tailed Ap 1}, we have for all
  $x \in C_r$,
$$|Tf(x)|  \geq c(a,n) \int_{Q(y_0,r)} \frac{f(y)}{(r+|x-y_0|)^n} \; dy.$$
By assumption $T: L^1(\nu) \rightarrow L^1(\mu)$, so we have that 
\begin{align}\label{eq:strong 1-1 1}
\int_{Q(y_0,r)} f(x)\; d\nu(x)  &\geq  c \int_{\mathbb{R}^n} |Tf(x)| \; d\mu(x) \\ \nonumber
& \geq  c \int_{C_r} \int_{Q(y_0,r)} \frac{f(y)}{(r+|x-y_0|)^n}  dy \; d\mu(x) \\  \nonumber
& =  c \int_{Q(y_0,r)} f(y) \; dy \int_{C_r} \frac{1}{(r+|x-y_0|)^n} \; d\mu(x).
\end{align}

If $\mu \neq 0$, since $\mu$ is a Borel measure, there exists a ball
$B$ such that $\mu(B) >0$. Fix a point $y_0$ and $r>0$ such that
$B \subset C_r$.  Let $f= \chi_{Q(y_0, r) \setminus \supp(\nu)}$ in
inequality \eqref{eq:strong 1-1 1}; then the left-hand side equals
$0$. Since $\nu$ is singular with respect to Lebesgue measure,
$|\supp(\nu)|=0$, so the first term on the right-hand side is
positive. Since the integrand in the second term on the right-hand
side is bounded away from $0$, the second term is positive unless
$\mu(C_r)=0$, a contradiction. Hence, $\mu =0$.

\medskip

Now let $\mu$ be a regular measure with Lebesgue decomposition
$d\mu= d\mu_s + u \; dx$, where $u \not\equiv 0$, and suppose
$d\nu=d\nu_s + vdx$, where $v$ is a non-negative function such that
$v(x)< \infty$ a.e.  Fix a point $y_0$ such that
$ 0< u(y_0) < \infty$. We can further assume that $y_0$ is a Lebesgue
point for $\mu$ in the sense of Lemma \ref{lem:differentiation
  theorem}, and that the conclusion of Lemma \ref{lem:essential
  infimum} holds for the function $v$ at $y_0$. Let
$a= \essinf_{ x \in Q(y_0,r) } v(x)$. Given $\epsilon>0$, let
$E= \{ x \in Q(y_0,r): v(x) < a + \epsilon \}$,
$A=E \setminus \supp(\nu_s)$, and set $ f= |A|^{-1} \chi_{A}$ in
inequality \eqref{eq:strong 1-1 1}.  By the definition of the
essential infimum, $|E|>0$, and since $|\supp(\nu_s)|=0$, we have that
$|A|>0$.  Thus,
$$ \int_{C_r} \frac{1}{(r+|x-x_0|)^n} \; d\mu(x)
\leq C\frac{\nu(A)}{|A|} \leq C\frac{v(A)}{|A|} 
\leq C(a+ \epsilon)
  = C [ \essinf_{x \in Q(y_0,r)} v(x) + \epsilon].$$
  Since $\epsilon>0$ was arbitrary, this inequality holds with
  $\epsilon=0$.    As $r\rightarrow 0$, $C_r$ converges to the cone $C_0$ with central
  axis $y_0+ ts\sqrt{n} u_0, s\geq 0$. Therefore, by the monotone
  convergence theorem and Lemma \ref{lem:essential infimum} we have
\begin{equation}\label{eq:strong 1-1 3}
\int_{C_0} \frac{1}{|x-y_0|^n} \; d\mu(x) \leq C v(y_0).
\end{equation}   
Let $B_j=B(y_0, 2^{-j}),  A_j=(C_0\cap B_j) \setminus B_{j+1}$. Since
$C_0$ has constant aperture, there exists $0<\alpha<1$ such that
$|A_j|=\alpha|B_j|$, so the collection $\{A_j\}$ shrinks nicely to $y_0$. Then we have that
\begin{equation}\label{eq:strong 1-1 4}
    \lim_{j\rightarrow \infty} \frac{\mu(A_j)}{|B_j|} = \alpha u(y_0). 
\end{equation}
Fix $j_0$ such that for all $j \geq j_0$ we have $\frac{\mu(A_j)}{|B_j|} \geq \frac{\alpha}{2} u(y_0)$.  Hence,
\begin{multline*}
v(y_0) \geq c \sum_{j \geq j_0} \int_{A_j} \frac{1}{|x-y_0|^n} d\mu(x)\\
\geq c\sum_{j \geq j_0} 2^{nj} \mu(A_j)
\geq c\sum_{j \geq j_0} \frac{\mu(A_j)}{|B_j|}  
 \geq c \sum_{j \geq j_0} u(y_0)= \infty.  
\end{multline*}
Let $E= \{ x: 0<u(x)< \infty \}$ which has positive measure since
$ u\not\equiv 0$. Then we have $v(x)= \infty$ for a.e $x\in E$. This
contradicts the fact that $ v(x)< \infty$ a.e.

\medskip

Finally, suppose $\mu$ is a  regular  measure that is
singular with respect to Lebesgue measure, and is directionaly doubling
in the direction $u_0$, and $\nu$ is a positive regular Borel
measure. If $T: L^1(\nu) \rightarrow L^1(\mu)$, then it satisfies a
weak $(1,1)$ inequality, and so by Theorem
\ref{thm:necessity measures} $\mu$ is absolutely continuous; a
contradiction.
\end{proof}

\bibliographystyle{plain}
\bibliography{necessity}

\end{document}